\newtheorem{prop}{Proposition}
\newtheorem{theo}[prop]{Theorem}
\newtheorem{lemm}[prop]{Lemma}
\newtheorem{coro}[prop]{Corollary}
\theoremstyle{definition}
\newtheorem{rema}[prop]{Remark}
\newtheorem{defi}[prop]{Definition}
\newcommand{\CC}{\mathbb{C}}
\newcommand{\RR}{\mathbb{R}}
\renewcommand{\SS}{\mathbb{S}}
\renewcommand{\cH}{\mathcal H}
\renewcommand{\cL}{\mathcal L}
\newcommand{\sH}{\mathscr{H}}
\DeclareMathOperator{\Span}{span}
\DeclareMathOperator{\supp}{supp}
\newcommand{\bangle}[1]{\left\langle #1 \right\rangle}
\define{\sff}{h}
\define{\tfsff}{\accentset{\circ}{\sff}}
\let\oldmarginpar\marginpar
\renewcommand\marginpar[1]{\-\oldmarginpar[\raggedleft\footnotesize #1]%
{\raggedright\footnotesize #1}}
\DeclareMathOperator{\re}{Re}
\DeclareMathOperator{\Index}{index}
\title{On the topology and index of minimal surfaces}
\address{Department of Mathematics, Stanford University, 450 Serra Mall, Bldg
380, Stanford, CA 94305}
\author{Otis Chodosh}
\email{ochodosh@math.stanford.edu}
\author{Davi Maximo}
\email{maximo@math.stanford.edu}
\date{\today}
\begin{document}
\begin{abstract}
We show that for an immersed two-sided minimal surface in $\RR^{3}$, there is a lower bound on the index depending on the genus and number of ends. Using this, we show the nonexistence of an embedded minimal surface in $\RR^{3}$ of index $2$, as conjectured by Choe \cite{Choe:vision}. Moreover, we show that the index of a immersed two-sided minimal surface with embedded ends is bounded from above and below by a linear function of the total curvature of the surface. 
\end{abstract}
\maketitle

\section{Introduction}

A minimal surface in $\RR^{3}$ is a hypersurface which is a critical point of the area functional. As a direct consequence of the maximum principle, minimal surfaces of $\RR^{3}$ must be non-compact. Hence, it is natural to study minimal surfaces of $\RR^{3}$ under some weaker finiteness assumption, such as finite total curvature or finite Morse index. 

Classically, the only known embedded minimal surfaces of finite total curvature were the plane and catenoid, but in 1982, Costa \cite{Costa:1984} found such an example with genus one and three ends (he only showed it was embedded outside of a compact set, subsequently Hoffman--Meeks \cite{HoffmanMeeks} showed that it was embedded). Later, Hoffman--Meeks \cite{HoffmanMeeks:Mk} constructed embedded examples with three ends and any positive genus. More recently, there have been many new minimal surfaces with finite total curvature constructed by various authors: for example, Kapouleas \cite{Kapouleas} and Traizet \cite{Traizet}  have developed (quite distinct) desingularization techniques, while Weber--Wolf \cite{WeberWolf} have established Teichm\"uller theoretical techniques to construct such examples.

As shown by Fischer-Colbrie \cite{Fischer-Colbrie:1985} and Gulliver--Lawson \cite{Gulliver-Lawson,Gulliver:indexFTC}, finite Morse index is actually equivalent to finite total curvature and implies that the surface is stable outside of a compact set. Work of Osserman \cite{Osserman:FTC} concerning minimal surfaces of finite total curvature then implies that finite index minimal surfaces are conformal to compact Riemann surfaces punctured at finitely many points and the Gauss map extends meromorphically across the punctures. Moreover, the index of a surface only depends on the Gauss map: it is equal to the number of eigenvalues less than two of the Laplacian induced by the (singular) metric of constant curvature one, pulled back from $\SS^{2}$ by the Gauss map (see the discussion in \cite{MontielRos}).

There are several examples of embedded minimal surfaces whose index is known:
\begin{itemize}
\item The plane has index $0$.
\item The catenoid has index $1$.
\item The Costa--Hoffman--Meeks surfaces of genus $g\geq 1$ have index $2g+3$ \cite{Nayatani:CHM-index,Morabito}.
\end{itemize}
Without requiring embeddedness, there are several more examples:
\begin{itemize}
\item Enneper's surface has index $1$.
\item The Chen--Gackstatter surface has index $3$ \cite[Corollary 15]{MontielRos}, as does the Richmond surface \cite{Tuzhilin}.
\item The Jorge--Meeks surface \cite[\S 5]{JorgeMeeks} with $r\geq3$ ends has index $2r-3$ \cite[Corollary 15]{MontielRos}. 
\item More generally, if the Gauss map of a minimal surface $\Sigma$ has branching values which all lie on an equator of $\SS^{2}$, then $\Index(\Sigma) = 2d-1$, where $d$ is the degree of the Gauss map \cite[Corollary 15]{MontielRos}
\item There is an immersed minimal surface of genus zero with four flat ends which has index $4$, studied by Kusner \cite{Kusner:conformal-geo}, Rosenberg--Toubiana \cite{Rosenberg:defoMinSurf,RosenbergToubiana} and Bryant \cite{Bryant:willmoreDuality,Bryant:surfConfGeo}; see \cite[Corollary 26]{MontielRos} and \cite[p.\ 88]{HoffmanKarcher}.
\end{itemize}

As both the geometry and topology of a finite index surface is well behaved, it is perhaps not surprising that assuming ``small index'' or ``simple topology,'' several results classifying finite total curvature surfaces have been obtained:
\begin{itemize}
\item The plane is the unique two-sided stable (index $0$) minimal surfaces, as proven independently by Fischer-Colbrie--Schoen \cite{Fischer-Colbrie-Schoen:1980}, do Carmo--Peng \cite{doCarmoPeng}, and Pogorelov \cite{Pogorelov}.
\item There are no one-sided stable minimal surfaces. Partial results were obtained by Ross \cite{Ross} and the full statement was proven by Ros \cite{Ros:one-sided}. 
\item The catenoid and Enneper's surface are the unique two-sided minimal surfaces of index $1$, by work of L\'opez--Ros \cite{LopezRos:index-one}. 
\item The plane is the unique embedded minimal surface of finite index with one end, by \cite[Proposition 1]{Schoen:symmetry} and the maximum principle.
\item The catenoid is the unique embedded finite index minimal surface with two ends, as proven by Schoen \cite{Schoen:symmetry}.
\item The plane and catenoid are the unique embedded finite index minimal surface of genus zero, as proven by L\'opez--Ros \cite{LopezRos:genus-zero}. 
\item The Hoffman--Meeks deformations of the Costa surface are the only embedded finite index minimal surfaces with three ends and genus one, by work of Costa \cite{Costa:JDG,Costa:Invent}. 
\item The Chen--Gackstatter surface is the unique two-sided minimal surface of genus one and with total curvature at least $-8\pi$ as shown by L\'opez \cite{Lopez:12pi} (see also \cite{Weber:tori}). 
\end{itemize}

%moduli space of finite index?

The following index bound is our main result. Below, we will show how it allows us to extend ``small index'' part of the above list to show that there are no embedded minimal surfaces in $\RR^{3}$ of index $2$. 
\begin{theo}\label{theo:index-bd}
Suppose that $\Sigma\to \RR^{3}$ is an immersed complete two-sided minimal surface of genus $g$ and with $r$ ends. Then
\begin{equation*}
\Index(\Sigma) \geq \frac{2}{3}(g+r) - 1.
\end{equation*}
%If the ends are not all parallel, this may be improved to
%\begin{equation*}
%\Index(\Sigma) \geq \frac{2}{3}(g+r) - \frac 2 3.
%\end{equation*}
\end{theo}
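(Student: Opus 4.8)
The plan is to exploit the fact, recalled in the introduction, that $\Index(\Sigma)$ equals the number of eigenvalues below $2$ of the Laplacian $\Delta_g$ of the pulled-back round metric $g = G^* g_{\SS^2}$ on the compact Riemann surface $\overline\Sigma$, where $G$ is the (meromorphically extended) Gauss map. Since this metric has constant curvature $1$ away from finitely many conical points and total area $4\pi d$ with $d = \deg G$, I would first produce many disjoint regions on $\overline\Sigma$ on each of which one can place a test function with Rayleigh quotient strictly less than $2$; the number of such disjoint regions is then a lower bound for $\Index(\Sigma)$ by the standard min-max (Courant-type) argument. The natural quantitative input is the relation between $d$, $g$, and $r$: by the Jorge--Meeks / Gauss--Bonnet formula for complete minimal surfaces of finite total curvature, $d = g + r - 1$ (for embedded ends; in general $\deg G \ge g+r-1$ with equality governed by the total curvature), so it suffices to show $\Index(\Sigma) \ge \tfrac{2}{3} d$, i.e. to find roughly $\tfrac{2}{3}\deg G$ disjoint ``unstable'' regions.

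The key step is a covering/packing argument on $\SS^2$ pulled back through $G$. Concretely, I would fix a configuration of geodesic balls (or spherical caps) on $\SS^2$ whose preimages under $G$ have total $G$-area $4\pi d$ and are spread out enough that a definite fraction of them are disjoint; on a spherical cap of a suitable fixed angular radius the first Dirichlet eigenvalue of the round Laplacian is strictly below $2$ (a cap slightly larger than a hemisphere already has $\lambda_1 < 2$, since the hemisphere itself has $\lambda_1 = 2$), and the conical singularities only decrease eigenvalues, so a test function supported on the preimage of such a cap has Rayleigh quotient $< 2$. Counting: each point of $\overline\Sigma$ maps to a point of $\SS^2$ covered by a bounded number of caps from the configuration, and $G$ is $d$-to-$1$, so partitioning $\SS^2$ suitably yields of order $d$ preimage regions, of which — after discarding overlaps and the regions meeting the finitely many branch/conical points — at least $\tfrac{2}{3}d - O(1)$ survive as pairwise disjoint unstable regions. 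Tracking the constants carefully to get exactly the coefficient $2/3$ and the additive $-1$ is where the real work lies.

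The main obstacle I anticipate is precisely this bookkeeping: one must choose the cap configuration on $\SS^2$ (its number, its angular radius, its combinatorial overlap pattern) so that after pulling back by a degree-$d$ map and throwing away the $O(1)$ bad regions near the conical points, the count of surviving disjoint regions is at least $\tfrac{2}{3}(g+r) - 1$ rather than merely $c\cdot d$ for some unspecified $c$. A clean way to organize this is to use three caps covering $\SS^2$ with controlled mutual overlap (so that the three preimage families together have total area $4\pi d$ and each individual family, being made of disjoint copies, contributes about $d/3$ usable regions), then argue that at least one family — or a balanced combination — gives the desired count; the additive constant $-1$ should come from absorbing the contribution of the finitely many ends/branch points into a single correction, together with the fact that $\deg G \ge g+r-1$ holds with possible strict inequality (which only helps). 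A secondary technical point is to make sure the one-sided vs. two-sided hypothesis is used correctly, so that the index is genuinely computed by $\Delta_g$ on $\overline\Sigma$ and the test functions built downstairs pull back to admissible variations; since $\Sigma$ is assumed two-sided this is exactly the setting of the Montiel--Ros description cited above, so no extra argument is needed there.
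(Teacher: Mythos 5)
Your reduction to counting eigenvalues below $2$ of the pulled-back spherical metric is a legitimate starting point (this is the Montiel--Ros picture), but the packing step at the heart of your argument fails, and not merely at the level of bookkeeping. A geodesic cap in $\SS^{2}$ has first Dirichlet eigenvalue $<2$ only if it is strictly larger than a hemisphere, and two such caps can never be disjoint (the midpoint of a geodesic joining their centers lies in both); so there is no configuration of disjoint ``unstable'' caps downstairs to pull back. Pulling back a single large cap $D$ does not rescue this: $G^{-1}(D)$ splits into $d$ disjoint copies of $D$ only when $D$ contains no branch value of $G$, so that the covering is trivial over the simply connected $D$. A cap larger than a hemisphere avoids all branch values only if every branch value lies in the complementary small cap, which fails in general --- Riemann--Hurwitz forces total branching $2d-2+2g$, and the branch values can be spread over $\SS^{2}$ (for the Jorge--Meeks surfaces they lie on an equator). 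When $D$ does contain branch values, $G^{-1}(D)$ is typically connected and yields only \emph{one} unstable region, not $d$. So the count of roughly $\tfrac{2}{3}d$ disjoint unstable regions is not achievable by this route; this is essentially why the disjoint-domain method of Grigor'yan--Netrusov--Yau only produced $\Index(\Sigma)\geq r-1$ for embedded surfaces, with the linear bound left as a conjecture. The ``real work'' you defer is the missing idea, not a constant to be tracked. (The assertion that conical singularities ``only decrease eigenvalues'' would also need justification.)

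The paper's proof is structurally different and avoids domain decompositions entirely. One shows that the space of harmonic $1$-forms on $\Sigma$ lying in a slightly weighted space $L^{2}_{-\delta}(\Sigma)$ has dimension $2(g+r-1)$: the $2g$ classical ones from $\overline\Sigma$, plus $2r-2$ coming from meromorphic differentials with simple poles at the punctures, which lie in $L^{2}_{-\delta}$ but not in $L^{2}$ --- this is how the ends enter the count. Each such form $\omega$ is paired with the coordinate differentials to produce a vector-valued test function $X_{\omega}$, and Ros's Bochner formula shows that $Q(X_{\omega},X_{\omega})\leq 0$, with equality forcing $\omega\in\Span\{\ast dx_{1},\ast dx_{2},\ast dx_{3}\}$; after rotating so that $\ast dx_{1},\ast dx_{2}\notin L^{2}_{-\delta}$, the condition $X_{\omega}\perp W$ imposes $3k$ linear conditions and a dimension count gives $3\Index(\Sigma)\geq 2(g+r)-3$. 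The weighted eigenfunction machinery of Section \ref{sec:weight-eig} exists precisely to make these non-$L^{2}$ test forms admissible. If you wish to salvage a spherical approach, you would need a genuinely new mechanism for producing on the order of $d$ mutually orthogonal test functions with Rayleigh quotient below $2$; the harmonic-form construction is, in effect, the replacement for that mechanism.
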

This improves on the bound $\Index(\Sigma)\geq \frac{2g}{3}$ proven by Ros in \cite[Theorem 17]{Ros:one-sided}. We note that Choe has proven an interesting lower bound for the index depending on a geometric quantity he terms the ``vision number'' \cite{Choe:vision}. Moreover, Grigor'yan--Netrusov--Yau have proven in \cite[p.\ 206]{GNY:eig} that if $\Sigma$ is embedded, then $\Index(\Sigma) \geq r-1$. 
%In this case, this is a stronger bound than ours in the ``many ends, small genus'' embedded regime. 
For an embedded surface of ``small genus with many ends,'' this bound is stronger\footnote{Note, however, that the well known Hoffmann--Meeks conjecture asserts that $r \leq g+2$ for embedded minimal surface; the validity of this would imply that our bound is stronger for a surface having more than $4$ ends.} than the one given in Theorem \ref{theo:index-bd}.

Our proof of Theorem \ref{theo:index-bd} is inspired by the one used in \cite{Ros:one-sided} as well as the work of Miyaoka \cite{Miyaoka:1993}. For minimal hypersurfaces in $\RR^{n}$, $n\geq 4$, the relationship between ends, harmonic functions/forms, and index has been investiaged in \cite{Palmer:stability,Tanno,CaoShenZhu,LiWang}. A crucial aspect in the proof of Theorem \ref{theo:index-bd} is computing the dimension of a space of weighted $L^{2}$-harmonic forms; see \cite{HauselHunsickerMazzeo:HodgeCoho} for results concerning this question in higher dimensions. 

Combining Theorem \ref{theo:index-bd} with the results discussed above classifying minimal surfaces of ``simple topology,'' we are able to show the non-existence of embedded minimal surfaces of index $2$. Such result was conjectured to be true by Choe \cite[Open Problem (v)]{Choe:vision}. We learned of it from David Hoffman.
 
\begin{theo}\label{theo:no-index2}
There are no embedded minimal surfaces in $\RR^{3}$ of index $2$. 
\end{theo}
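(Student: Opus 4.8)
The plan is to combine Theorem \ref{theo:index-bd} with the classification results collected in the introduction, arguing by elimination according to the number of ends. Suppose $\Sigma \to \RR^3$ is an embedded complete minimal surface with finite total curvature and $\Index(\Sigma) = 2$. Since $\Sigma$ is embedded it is two-sided, and by Fischer-Colbrie and Gulliver--Lawson finite index is equivalent to finite total curvature, so all the classification statements apply. Write $g$ for the genus and $r \geq 1$ for the number of ends.

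First I would dispose of the cases with few ends. If $r = 1$, then by \cite[Proposition 1]{Schoen:symmetry} together with the maximum principle $\Sigma$ is a plane, which has index $0 \neq 2$. If $r = 2$, then by Schoen \cite{Schoen:symmetry} $\Sigma$ is the catenoid, which has index $1 \neq 2$. So we may assume $r \geq 3$. Next, by the genus-zero classification of L\'opez--Ros \cite{LopezRos:genus-zero}, an embedded finite index minimal surface of genus zero is a plane or a catenoid, neither of index $2$; hence $g \geq 1$.

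Now I would apply Theorem \ref{theo:index-bd}: since $\Index(\Sigma) = 2$ we get $2 \geq \tfrac{2}{3}(g+r) - 1$, i.e. $g + r \leq \tfrac{9}{2}$, so $g + r \leq 4$. Combined with $g \geq 1$ and $r \geq 3$ this forces $g = 1$ and $r = 3$. At this point the surface has genus one and three ends, so by the classification of Costa \cite{Costa:JDG,Costa:Invent} it must be a member of the Hoffman--Meeks family of deformations of the Costa surface. But the Costa--Hoffman--Meeks surfaces of genus $g \geq 1$ have index $2g + 3$, which for $g = 1$ equals $5 \neq 2$. This contradiction completes the proof.

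The only real subtlety — and the place to be careful in the write-up rather than the place of genuine difficulty — is bookkeeping about which hypotheses each cited classification theorem needs: most are stated for embedded finite-index (equivalently finite total curvature) minimal surfaces, which is exactly our situation, but one should check that two-sidedness (automatic from embeddedness) and completeness are in force throughout so that Theorem \ref{theo:index-bd} and the Costa rigidity theorem both apply. No new analysis is required; the argument is a short deduction once Theorem \ref{theo:index-bd} is available, and the numerology $g+r \leq 4$ with $r \geq 3$, $g \geq 1$ is what pins everything down.
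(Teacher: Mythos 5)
Your argument follows the paper's proof almost step for step: the numerology $g+r\leq 9/2$ from Theorem \ref{theo:index-bd}, the elimination of $r=1,2$ via Schoen and the maximum principle, the elimination of $g=0$ via L\'opez--Ros, and the reduction to $(g,r)=(1,3)$ handled by Costa's rigidity theorem. All of that is correct and is exactly what the paper does.

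There is, however, a gap in your final step. Costa's theorem tells you that $\Sigma$ is \emph{some} member of the one-parameter Hoffman--Meeks deformation family of the Costa surface, not that it is the Costa surface itself. The index computation you invoke --- that the Costa--Hoffman--Meeks surface of genus $g$ has index $2g+3$ \cite{Nayatani:CHM-index,Morabito} --- is proved for the distinguished (most symmetric) surfaces in that family, and the index of a minimal surface is not a priori constant along a deformation family (it is the number of eigenvalues below $2$ of a Laplacian depending on the Gauss map, which varies with the parameter). So citing ``index $=5$'' does not rule out the other members of the family. The paper closes this gap differently: it invokes Choe's vision-number lower bound \cite{Choe:vision} (cf.\ \cite[Corollary 7.2]{HoffmanKarcher}), which shows that \emph{every} member of the Hoffman--Meeks deformation family has index at least $3$ --- a weaker but uniform statement that suffices. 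Your write-up should replace the appeal to the $2g+3$ computation with this (or with some other argument controlling the index along the whole family).
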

\begin{proof}
If $\Index(\Sigma)=2$, then Theorem \ref{theo:index-bd} implies that 
\begin{equation*}
g+r\leq \frac 9 2.
\end{equation*}
As $\Sigma$ has finite total curvature, an embedded end is asymptotic either to a plane or catenoid by \cite[Proposition 1]{Schoen:symmetry}. As such, if $r=1$, then the maximum principle implies that $\Sigma$ is a plane, which is stable. Moreover, by \cite[Theorem 3]{Schoen:symmetry}, if $r=2$, then $\Sigma$ is a catenoid, which has index $1$. As such $r\geq 3$, so the only possibilities are $(g,r) \in \{(0,3),(1,3), (0,4)\}$. However, \cite{LopezRos:genus-zero} rules out the genus zero possibilities: the only embedded, complete minimal surfaces in $\RR^{3}$ with finite total curvature and genus zero are the plane and catenoid. Hence, it remains to rule out $g=1,r=3$. By \cite{Costa:Invent}, such a surface must be a member of the Hoffman--Meeks deformation family of the Costa surface (cf.\ \cite[\S 4]{HoffmanKarcher}). However, \cite{Choe:vision} (cf.\ \cite[Corollary 7.2]{HoffmanKarcher}) shows that each member of this family has index at least $3$ (note that the Costa surface actually has index equal to $5$). 
\end{proof}

\begin{rema}
It is interesting to observe that a similar argument in the index $0$ and $1$ case allows us to show that an embedded minimal surface of index $0$ must be the plane, while an embedded minimal surface of index $1$ must be the catenoid. As such, this gives an alternative proof (in the case of embedded surfaces) of the well known results \cite{Fischer-Colbrie-Schoen:1980,doCarmoPeng,LopezRos:index-one}. See also \cite[\S 4]{Choe:vision}.
\end{rema}

\begin{rema}
As pointed out to us by David Hoffman, Theorem \ref{theo:no-index2} and the list of examples of minimal surfaces whose index is known raises several interesting questions: 
\begin{itemize}
\item Is $2$ the only number which is not the index of a two-sided minimal surface? 
\item Can there exist an embedded minimal surface of nonzero even index? 
\item Can there exist an embedded minimal surface of index $3$? 
\end{itemize}
Pertaining to the first question, note that all odd numbers (as well as $0$ and $4$) are known to be attained as the index of a two-sided minimal surface. Moreover, Choe \cite[Theorem 7]{Choe:vision} and Nayatani \cite[Corollary 3.3]{Nayatani} have independently shown the non-existence of immersed minimal surfaces with index $2$ and genus zero.
\end{rema}
Finally, because our lower bound in Theorem \ref{theo:index-bd} depends linearly on the genus and number of ends, we are able to establish a linear inequality between the index and finite total curvature of minimal surfaces with embedded ends. That such a lower bound should hold was conjectured by Grigor'yan--Netrusov--Yau \cite[p.\ 203]{GNY:eig} and some partial results along these lines were proven there; note that for an embedded surface, a bound (with a worse constant) follows from the Jorge--Meeks relation in combination of \cite[p.\ 206]{GNY:eig} and \cite[Theorem 17]{Ros:one-sided}. Our bound should also be compared to the remark by Fischer-Colbrie in \cite[p.\ 132]{Fischer-Colbrie:1985} that there should be an explicit relation between the index and geometry of the Gauss map.

\begin{theo}\label{theo:index-vs-tot-curv}
For $\Sigma$ a two-sided minimal surface in $\RR^{3}$ with embedded ends and finite total curvature, we have that
\begin{equation*}
-\frac 1 3 + \frac 2 3\left( - \frac{1}{4\pi}\int_{\Sigma}\kappa \right) \leq \Index(\Sigma) \leq (7.7)\left( - \frac{1}{4\pi}\int_{\Sigma}\kappa \right).
\end{equation*}
\end{theo}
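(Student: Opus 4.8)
The plan is to obtain both inequalities by combining the Gauss--Bonnet/Jorge--Meeks relation with two estimates that are (or may be taken to be) already available: Theorem~\ref{theo:index-bd} for the left-hand inequality and Tysk's eigenvalue estimate for the right-hand one. Recall that, since $\Sigma$ has finite total curvature, it is conformally a closed Riemann surface $\overline{\Sigma}$ of genus $g$ with the $r$ ends removed; the Gauss map extends to a branched cover $G\colon\overline{\Sigma}\to\SS^{2}$, and if $d$ denotes its degree then $-\frac{1}{4\pi}\int_{\Sigma}\kappa = d$ (the sphere $\SS^{2}$ has area $4\pi$ and is covered $d$ times by $G$). Since an embedded end has multiplicity one, the Jorge--Meeks formula \cite{JorgeMeeks}, under our hypothesis that all ends are embedded, reduces to $d = g+r-1$; equivalently
\[
  g + r \;=\; d + 1 \;=\; \Bigl(-\tfrac{1}{4\pi}\int_{\Sigma}\kappa\Bigr) + 1 .
\]
This single identity is what converts the topological quantity $g+r$ appearing in Theorem~\ref{theo:index-bd} into the analytic quantity $-\frac{1}{4\pi}\int_{\Sigma}\kappa$.

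For the lower bound I would simply substitute this into Theorem~\ref{theo:index-bd}:
\[
  \Index(\Sigma)\;\ge\;\tfrac{2}{3}(g+r)-1
  \;=\;\tfrac{2}{3}\Bigl(-\tfrac{1}{4\pi}\int_{\Sigma}\kappa+1\Bigr)-1
  \;=\;-\tfrac{1}{3}+\tfrac{2}{3}\Bigl(-\tfrac{1}{4\pi}\int_{\Sigma}\kappa\Bigr),
\]
which is exactly the asserted inequality. It should be stressed that embeddedness of the ends is used here in an essential way: without it one only has $d\ge g+r-1$, which pushes the estimate in the wrong direction.

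For the upper bound I would use the description (recalled in the introduction, due to Montiel--Ros \cite{MontielRos}) of $\Index(\Sigma)$ as the number of eigenvalues strictly below $2$ of the Laplacian $\Delta_{\tilde g}$ of the conically singular pulled-back metric $\tilde g = G^{*}g_{\SS^{2}}$ on $\overline{\Sigma}$, together with the eigenvalue estimate of Tysk \cite{Tysk}. Since $(\overline{\Sigma},\tilde g)$ is a degree-$d$ branched cover of the round sphere, a heat-trace bound $N(2)\,e^{-2t}\le\operatorname{Tr}e^{-t\Delta_{\tilde g}}=\int_{\overline{\Sigma}}p_{t}(x,x)\,dV_{\tilde g}$, a heat-kernel comparison exploiting the branched-cover structure, and optimization over $t>0$ give $\Index(\Sigma)\le c\,d$ where $c = 7.68156\ldots$ is Tysk's constant, whence $\Index(\Sigma)\le 7.7\,d = 7.7\bigl(-\tfrac{1}{4\pi}\int_{\Sigma}\kappa\bigr)$. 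This half needs only finiteness of the total curvature, not embeddedness of the ends; it is stated with embedded ends merely so that the two inequalities carry the same hypotheses.

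Since the substantive work is all upstream---Theorem~\ref{theo:index-bd} on one side, Tysk's estimate on the other---there is no real obstacle inside this proof. The only points requiring care are quoting the correct (embedded-ends) form of the Jorge--Meeks formula and keeping track that it is embeddedness of the \emph{ends}, not of $\Sigma$, that is used, together with checking that Tysk's constant does not exceed $7.7$. Were one to insist on a self-contained argument on the upper side, the genuine difficulty would be reproving Tysk's bound, i.e.\ controlling the counting function of $\Delta_{\tilde g}$ on a branched cover of $\SS^{2}$ carrying conical singularities of cone angle $2\pi m \ge 2\pi$ at the branch points; but as the statement stands one may simply cite it.
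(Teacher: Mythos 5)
Your proposal is correct and follows exactly the same route as the paper: the lower bound is obtained by substituting the embedded-ends Jorge--Meeks relation $-\frac{1}{4\pi}\int_{\Sigma}\kappa = g+r-1$ into Theorem \ref{theo:index-bd}, and the upper bound is cited from Tysk \cite{Tysk}. The additional remarks on the Montiel--Ros eigenvalue characterization and the heat-kernel mechanism behind Tysk's constant are accurate but not needed beyond the citation.
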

\begin{proof}
The upper bound has been proven in \cite{Tysk} (we note that more refined upper bounds have been proven by Ejiri--Micallef \cite{EjiriMicallef}). The lower bound follows by combining Theorem \ref{theo:index-bd} with the Jorge--Meeks relation 
\begin{equation*}
-\frac {1}{4\pi} \int_{\Sigma}\kappa = g + r -1
\end{equation*}
between the total curvature and Euler characteristic of such a surface \cite{JorgeMeeks} (see also \cite{Gackstatter} and \cite[(2.21)]{HoffmanKarcher}).
\end{proof}

\subsection{Outline of the paper} In Section \ref{sec:struct-min-surf}, we collect several well known facts about minimal surfaces of finite index. We also construct the cutoff function which is used repeatedly in the sequel. Then, in Section \ref{sec:weight-eig}, we show that one may find weighted eigenfunctions for the Jacobi operator, which will allow us to plug the test functions constructed in Section \ref{sec:ends-1form} into the second variation quadratic form; by allowing functions with slower decay, we can find good test functions (coming from harmonic $1$-forms) which correspond to the ends of the minimal surface, not just the topology in the compact region. Finally, in Section \ref{sec:proof-thm1}, we prove Theorem \ref{theo:index-bd}.

\subsection{Acknowledgements} 
%\thanks{David, Mario, Rafe, Rick, Simon}

We are very grateful to David Hoffman for bringing the index two problem to our attention, as well as sharing with us his insight and enthusiasm. We acknowledge useful and enjoyable discussions with Rafe Mazzeo, Mario Micallef, and Rick Schoen concerning this work and thank Simon Brendle and Brian White for their interest and encouragement. O.C.\ was partially supported by the National Science Foundation Graduate Research Fellowship under Grant No. DGE-1147470. D.M. thanks Fernando Cod\'{a} Marques for his mentorship and the Simons Foundation for its support by way of the AMS-Simons Travel Grant. 

\section{Finite index minimal surfaces}\label{sec:struct-min-surf}

For $\Sigma$ an immersed two-sided minimal surface in $\RR^{3}$ with $0\in\Sigma$, we consider the stability operator defined by $L:=-\Delta+2\kappa$ and the associated quadratic form
\begin{equation*}
Q(\phi,\phi) : = \int_{\Sigma}|\nabla \phi|^{2}+2\kappa\phi^{2}.
\end{equation*}
Here, $\kappa$ is the Gauss curvature of $\Sigma$. Throughout, we will denote by $B_{R}(0) = \{x\in\RR^{3} : |x|< R\}$ the extrinsic ball of radius $R$ and $B_{R}^{\Sigma}(0) = \{x \in \Sigma : d_{\Sigma}(x,0) < R\}$ the intrinsic ball of radius $R$.  Furthermore, $C$ will denote a constant which is allowed to change from line to line. 
\begin{defi}\label{def:finite-index}
For $R>0$, we define the \emph{index} of $\Sigma\cap B_{R}^{\Sigma}(0)$, $\Index(\Sigma\cap B_{R}^{\Sigma}(0))$ to be the number of negative eigenvalues for $L$ on $\Sigma\cap B_{R}^{\Sigma}(0)$ with Dirichlet boundary conditions. We define the \emph{index} of $\Sigma$ to be
\begin{equation*}
\Index(\Sigma) : = \lim_{R\to\infty} \Index(\Sigma\cap B_{R}^{\Sigma}(0))
\end{equation*}
and say that $\Sigma$ has \emph{finite index} if this limit is finite. 
\end{defi}

See \cite{MontielRos,HoffmanKarcher} for further discussion of finite index minimal surfaces.  We will always assume that $\Sigma$ has finite index throughout this paper. Recall that if $\Sigma$ is a two-sided immersed minimal surface of finite index in $\RR^{3}$, then it has finite total curvature  \cite{Fischer-Colbrie:1985}. Hence, as a consequence of \cite{Osserman:FTC}, we have that $\Sigma$ is conformally equivalent to a compact Riemann surface $\overline \Sigma$, punctured at finitely many points $p_{1},\dots,p_{m}$; moreover, the Gauss map extends across the punctures as a meromorphic map. In particular, such a $\Sigma$ is properly immersed. This implies that when $\Sigma$ is known to have finite index, then we may also compute the index by taking the limit of extrinsic balls (we only consider $R$ in the dense set so that $\Sigma$ is transverse to the sphere $S_{R}(0)$): 
\begin{equation*}
\Index (\Sigma) = \lim_{R\to\infty} \Index(\Sigma \cap B_{R}(0)).
\end{equation*}
Additionally, we have that if $E$ is an end of $\Sigma$, then the homothetic rescaling $\frac 1 R E$ converges, on compact subsets of $\RR^{3}\setminus\{0\}$, to a single plane through the origin, taken with finite multiplicity. We note that this implies that the Gauss curvature of $\Sigma$ decays at least quadratically, i.e., $|\kappa| \leq C (1+|x|^{2})^{-1}$. By the Gauss equations, this also shows that the second fundamental form $\sff$ satisfies $|\sff| \leq C (1+|x|^{2})^{-\frac 12}$. Here, and throughout the rest of the paper, we will use $|x|$ to denote the Euclidean norm of $x$. 

\begin{lemm}\label{lemm:cutoff-fct}
For every $R>0$ sufficiently large, we may find $\varphi_{R} \in C^{2}_{c}(\Sigma)$ and $C>0$ independent of $R$  so that:
\begin{enumerate}
\item $0\leq \varphi_{R}\leq 1$,
\item $\supp \varphi_{R}\subset B_{2R}(0)\cap \Sigma$,
\item $\varphi_{R}\equiv 1$ on $B_{R}(0) \cap\Sigma$,
\item $|\nabla\varphi_{R}|\leq \frac {C}{R}$,
\item $|\varphi_{R}\Delta\varphi_{R}| \leq \frac{C}{|x|^{2}}$ on $\Sigma\cap (B_{2R}(0)\setminus B_{R}(0))$.
\end{enumerate}
\end{lemm}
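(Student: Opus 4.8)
The plan is to take $\varphi_R$ to be a fixed smooth profile function applied to the rescaled squared extrinsic distance. Write $\rho:=|x|$ and observe that $\rho^{2}$, being the restriction to $\Sigma$ of the smooth function $x\mapsto|x|^{2}$ on $\RR^{3}$, is smooth on all of $\Sigma$, and that its surface gradient is the tangential part of the ambient gradient $2x$, so $|\nabla(\rho^{2})|\le 2\rho$. The key point is that $\Sigma$ is minimal, so its coordinate functions are harmonic on $\Sigma$, whence
\[
\Delta(\rho^{2})=2|\nabla x|^{2}=2\dim\Sigma=4 .
\]

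I would then fix, once and for all, a function $\eta\in C^{\infty}([0,\infty);[0,1])$ with $\eta\equiv1$ on $[0,1]$, $\supp\eta\subset[0,4)$, and $\|\eta'\|_{\infty}+\|\eta''\|_{\infty}\le C_{0}$, and set $\varphi_{R}:=\eta(\rho^{2}/R^{2})$. Properties (1)--(3) are immediate from the choice of $\eta$. The support of $\varphi_{R}$ is contained in $\{|x|<2R\}\cap\Sigma$, which is compact because finite index forces $\Sigma$ to have finite total curvature and hence to be properly immersed (as recalled above); thus $\varphi_{R}\in C^{2}_{c}(\Sigma)$. For (4), the chain rule gives $\nabla\varphi_{R}=R^{-2}\eta'(\rho^{2}/R^{2})\nabla(\rho^{2})$, so on $\supp\varphi_{R}$, where $\rho<2R$, we get $|\nabla\varphi_{R}|\le 2C_{0}\rho/R^{2}\le 4C_{0}/R$.

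For the essential estimate (5), differentiating once more and using $\Delta(\rho^{2})=4$ yields
\[
\Delta\varphi_{R}=R^{-4}\eta''(\rho^{2}/R^{2})\,|\nabla(\rho^{2})|^{2}+4R^{-2}\eta'(\rho^{2}/R^{2}),
\]
so $|\Delta\varphi_{R}|\le 4C_{0}\,(\rho^{2}R^{-4}+R^{-2})$ on $\supp\varphi_{R}$, using $|\nabla(\rho^{2})|^{2}\le 4\rho^{2}$. On the annulus $\Sigma\cap(B_{2R}(0)\setminus B_{R}(0))$ one has $R\le\rho\le 2R$, hence $R^{-2}\le\rho^{-2}$ and $\rho^{2}R^{-4}\le4\rho^{-2}$; therefore $|\Delta\varphi_{R}|\le C/|x|^{2}$ there, and since $0\le\varphi_{R}\le1$ we conclude $|\varphi_{R}\Delta\varphi_{R}|\le|\Delta\varphi_{R}|\le C/|x|^{2}$ with $C$ depending only on $C_{0}$.

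The argument is essentially routine and I do not anticipate a genuine obstacle; the only real observations are that one should compose with $|x|^{2}$ rather than $|x|$ — so that $\varphi_{R}$ is globally $C^{2}$ and the Laplacian of the inner function is literally the constant $4$ — and that on the transition annulus $|x|$ is comparable to $R$, so that a bound of order $R^{-2}$ on $\Delta\varphi_{R}$ is automatically of order $|x|^{-2}$. The hypothesis that $R$ be large plays no role in these estimates but is harmless and matches its use later in the paper.
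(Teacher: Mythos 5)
Your construction is correct, and for the only nontrivial item, property (5), it takes a genuinely different route from the paper. The paper restricts a generic rescaled bump function $\varphi(x/R)$ from $\RR^{3}$ to $\Sigma$ and controls $\Delta_{\Sigma}\varphi_{R}$ on the transition annulus by a blow-down argument: each component of $\frac 1R\Sigma$ in the annulus converges to a plane with multiplicity, giving uniform geometric control there. You instead build $\varphi_{R}=\eta(|x|^{2}/R^{2})$ from a one-dimensional profile and exploit minimality directly: the coordinate functions are harmonic on $\Sigma$, so $\Delta_{\Sigma}|x|^{2}=2\sum_{i}|\nabla x_{i}|^{2}=4$ exactly, and the chain rule then gives an explicit bound $|\Delta\varphi_{R}|\leq C(\rho^{2}R^{-4}+R^{-2})$ with no reference to the asymptotic structure of the ends. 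Your version is more elementary and self-contained (it uses only properness, needed for compact support, and minimality), while the paper's version is more flexible in that it works for any ambient cutoff and makes transparent that the same scaling holds for the function $\eta$ used later in Proposition \ref{prop:weighted-FC}. Two cosmetic points: the set $B_{2R}(0)\cap\Sigma$ is open, so you should say that $\supp\varphi_{R}$ is a closed bounded subset of the properly immersed $\Sigma$, hence compact; and on the annulus the correct inequalities are $R^{-2}\leq 4\rho^{-2}$ and $\rho^{2}R^{-4}\leq 16\rho^{-2}$ (you dropped factors of $4$), which of course only changes the unspecified constant $C$.
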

\begin{proof}
First, define $\varphi(x)$ to be a smooth function on $\RR^{3}$ which satisfies $0 \leq \varphi\leq 1$, $\supp\varphi\subset B_{2}(0)$ and $\varphi\equiv 1$ on $B_{1}(0)$. Then, let $\varphi_{R}(x) : = \varphi(\frac x R)$ restricted to $\Sigma$. Properties (1) - (3) follow automatically from this definition. Property (4) follows by scaling and the fact that $\varphi$ is a smooth function of bounded support. Finally, (5) follows from a scaling argument, along with the fact each component of $\frac 1 R \Sigma \cap (B_{3}(0)\setminus B_{\frac 12}(0))$ converges to a plane through the origin, possibly with multiplicity (which, in particular, implies that it has uniformly bounded second fundamental form in the annular region). 
\end{proof}

\section{Weighted eigenfunctions}\label{sec:weight-eig}

Fix $\delta >0$. We define the weighted space $L^{2}_{-\delta}(\Sigma)$ to be the completion of smooth compactly supported functions with respect to the norm
\begin{equation*}
\Vert f \Vert_{L^{2}_{-\delta}(\Sigma)}^{2} : = \int_{\Sigma}(1+|x|^{2})^{-\delta} f^{2},
\end{equation*}
where $|x|$ is the Euclidean distance. This norm clearly comes from an inner product, making $L^{2}_{-\delta}$ into a Hilbert space. 

The natural eigenfunction equation associated to the stability operator in the weighted space $L^{2}_{-\delta}$ is 
\begin{equation*}
\Delta f - 2\kappa f + \lambda (1+|x|^{2})^{-\delta} f = 0.
\end{equation*}
We will refer to these as $(-\delta)$-eigenfunctions of the stability operator. The following Proposition is an extension of \cite[Proposition 2]{Fischer-Colbrie:1985} to the weighted case. It was inspired by the fact that the index of a quadratic form associated to a Schr\"odinger operator taken with respect to weighted and unweighted $L^{2}$-spaces is the same; cf.\ \cite{Simon:schrod} and the references therein.

%Note that weighted eigenfunctions of Schr\"odinger operators has been studied by many authors, cf.\ \cite{Simon:schrod}. 
\begin{prop}\label{prop:weighted-FC}
Suppose that $\Sigma$ has finite index $k=\Index(\Sigma)$ in the usual $L^{2}$-sense; see Definition \ref{def:finite-index}. Fixing $\delta \in (0,1)$, there exists a $k$-dimensional subspace $W$ of $L^{2}_{-\delta}(\Sigma)$ with an $L^{2}_{-\delta}$-orthornormal basis of $(-\delta)$-eigenfunctions for the stability operator $f_{1},\dots,f_{k}$. Letting the associated eigenvalues be $\lambda_{1},\dots,\lambda_{k}$, each $\lambda_{i} < 0$ and moreover $Q(\phi,\phi) \geq 0$ for $\phi \in C^{\infty}_{0}(\Sigma) \cap W^{\perp}$, where $W^{\perp}\subset L^{2}_{-\delta}(\Sigma)$ is the $L^{2}_{-\delta}(\Sigma)$ orthogonal complement of $W$. 
\end{prop}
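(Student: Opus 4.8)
The plan is to mimic the classical argument of Fischer-Colbrie \cite{Fischer-Colbrie:1985} that produces $L^2$-eigenfunctions below the bottom of the essential spectrum, but to carry it out with respect to the weighted inner product $\langle f,g\rangle_{-\delta} = \int_\Sigma (1+|x|^2)^{-\delta} fg$. The key analytic input is that the operator $L = -\Delta + 2\kappa$, viewed through the quadratic form $Q$, has the same index on $L^2_{-\delta}$ as on $L^2$: this is the philosophy of \cite{Simon:schrod}, and here it is concrete because $\delta \in (0,1)$ together with the quadratic decay $|\kappa| \le C(1+|x|^2)^{-1}$ ensures the weight and the potential are mutually controlled. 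First I would fix an exhaustion of $\Sigma$ by compact domains $\Sigma_R = \Sigma \cap B_R(0)$ (with $R$ in the dense set where $\Sigma$ is transverse to $S_R$) and on each $\Sigma_R$ solve the Dirichlet eigenvalue problem $\Delta f - 2\kappa f + \lambda(1+|x|^2)^{-\delta} f = 0$, i.e.\ the generalized eigenvalue problem for $Q$ relative to the weighted $L^2_{-\delta}$-norm. Standard elliptic theory gives a discrete spectrum $\lambda_1(R) \le \lambda_2(R) \le \cdots$ with $L^2_{-\delta}(\Sigma_R)$-orthonormal eigenfunctions, and the min-max characterization shows $\lambda_i(R)$ is monotone nonincreasing in $R$.

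Next I would identify the number of negative weighted eigenvalues in the limit with $k = \Index(\Sigma)$. The point is that, by the variational characterization, $\lambda_i(R) < 0$ if and only if there is an $i$-dimensional subspace of $C^\infty_c(\Sigma_R)$ on which $Q$ is negative definite — and the sign of $\lambda_i(R)$, not its size, is what matters, so the weight is irrelevant to counting: $Q(\phi,\phi) < 0$ is unaffected by which positive weight normalizes $\phi$. Hence the number of negative $\lambda_i(R)$ equals $\Index(\Sigma_R)$, which by Definition \ref{def:finite-index} (and the fact that finite-index surfaces are properly immersed, so intrinsic and extrinsic balls give the same count) converges to $k$ as $R \to \infty$. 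Thus for $R$ large there are exactly $k$ negative weighted Dirichlet eigenvalues $\lambda_1(R), \dots, \lambda_k(R)$, all bounded below by $\lambda_1 := \lim_R \lambda_1(R) > -\infty$ (finiteness of $\lambda_1$ follows from $\Index(\Sigma) < \infty$ applied to the unweighted problem, or directly from a lower bound on $Q$ away from a compact set).

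Then comes the compactness step: I would extract a limit of the eigenfunctions. Normalizing $\|f_i^{(R)}\|_{L^2_{-\delta}(\Sigma)} = 1$, the eigenvalue equation plus $\lambda_i(R)$ bounded gives a uniform $H^1_{loc}$ bound (test the equation against $f_i^{(R)}$ times a cutoff, using $|\kappa| \le C$ and interior elliptic estimates), so along a subsequence $f_i^{(R)} \to f_i$ in $C^2_{loc}$ and $\lambda_i(R) \to \lambda_i < 0$, with $f_i$ a genuine $(-\delta)$-eigenfunction. The essential difficulty — and the step I expect to be the main obstacle — is ruling out loss of mass at infinity, i.e.\ showing $f_i \not\equiv 0$ and that the $f_i$ remain $L^2_{-\delta}$-orthonormal in the limit (so that $W := \Span\{f_1,\dots,f_k\}$ really is $k$-dimensional inside $L^2_{-\delta}(\Sigma)$). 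This is where $\delta > 0$ is essential: one needs a decay estimate showing that a weighted eigenfunction with negative eigenvalue, on the ends where $\Sigma$ is stable (Fischer-Colbrie–Schoen), cannot concentrate near the punctures — a Caccioppoli/weighted-Rellich argument using the cutoff $\varphi_R$ from Lemma \ref{lemm:cutoff-fct}, the stability inequality $Q \ge 0$ outside a compact set, and the quadratic curvature decay, to bound $\int_{\Sigma \setminus B_R} (1+|x|^2)^{-\delta} f_i^2$ uniformly small. Finally, for the last assertion, given $\phi \in C^\infty_0(\Sigma) \cap W^\perp$, I would apply the min-max inequality on a large $\Sigma_R$ containing $\supp\phi$: since $\phi$ is $L^2_{-\delta}$-orthogonal to the span of $f_1, \dots, f_k$ (and, up to the convergence just established, nearly orthogonal to $f_1^{(R)}, \dots, f_k^{(R)}$), the quotient $Q(\phi,\phi)/\|\phi\|_{L^2_{-\delta}}^2 \ge \lambda_{k+1}(R) \ge \lim_R \lambda_{k+1}(R) \ge 0$, the last inequality because there are only $k$ negative weighted eigenvalues in the limit; a small perturbation argument absorbs the "nearly." This yields $Q(\phi,\phi) \ge 0$ on $C^\infty_0(\Sigma) \cap W^\perp$, completing the proof.
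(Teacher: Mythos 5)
Your proposal follows essentially the same route as the paper: exhaust by extrinsic balls, solve the weighted Dirichlet eigenvalue problem (noting the count of negative eigenvalues is weight-independent), and rule out loss of mass at infinity by combining the exterior stability inequality with the eigenfunction equation — exactly the paper's computation with $\eta f_{i,\rho}$, which yields $\int_{\Sigma\setminus B_{2R}}(1+|x|^{2})^{-\delta}f_{i,\rho}^{2}\leq cR^{-2(1-\delta)}$, followed by a diagonal argument using the compact embedding of $W^{1,2}$ into $L^{2}_{-\delta}$ on compact pieces. One small correction: the constraint that makes the tail estimate close is $\delta<1$ (so that $R^{-2(1-\delta)}\to 0$), not $\delta>0$, and the curvature terms cancel exactly in that computation rather than being absorbed via quadratic decay.
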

\begin{proof}
We will adapt the arguments in \cite[Proposition 1]{Fischer-Colbrie:1985}, except it is convenient to work in extrinsic (rather than intrinsic) balls, because of the weighted setting. First, note that there is $R_{0}$ so that $\Sigma\setminus B_{R_{0}}(0)$ is stable. We claim that (taking $R_{0}$ larger if necessary) for $R>R_{0}$ we may find a function $\eta$ which satisfies
\begin{align*}
\eta & \equiv 0 \text{ \ on $B_{R}(0)$} \\
\eta & \equiv 1 \text{ \ on $\Sigma\setminus B_{2R}(0)$},
\end{align*}
has $|\nabla\eta|\leq 6/R$, and so that $|\nabla\eta|^{2}\leq \frac{4(1-\eta^{2})}{R^{2}}$ on $B_{2R}(0)$. As remarked above, the only difference between our setting and  \cite[p.\ 124]{Fischer-Colbrie:1985} is that we are using extrinsic, rather than intrinsic balls. To find such a function, choose $\tilde\eta \in C^{\infty}(\RR^{3})$ with $\tilde\eta_{1} \equiv 0$ on $B_{1}(0)$, $\tilde\eta_{1} \equiv 1$ on $\RR^{3}\setminus B_{2R}(0)$ and $|\nabla_{\RR^{3}} \tilde \eta_{1}|\leq 2$. Then, we may define $\tilde \eta : = 1-(1-\tilde\eta_{1})^{2}$, and $\eta(x) : = \tilde \eta(\frac x R)$. The desired gradient bounds on $\eta$ now follows by a blow-down argument exactly as in Lemma \ref{lemm:cutoff-fct}.

%By \cite[Proposition 1]{Fischer-Colbrie:1985}, there is $R_{0}$ so that $\Sigma\setminus B_{R_{0}}(0)$ is stable. For $R>R_{0}$, let $\eta$ be a function which satisfies
%\begin{align*}
%\eta& \equiv 0 \text{ \ on $B_{R}(0)$} \\
%\eta & \equiv 1 \text{ \ on $\Sigma\setminus B_{2R}(0)$}
%\end{align*}
%and $|\nabla\eta|\leq 6/R$. As in \cite[p.\ 124]{Fischer-Colbrie:1985} we may choose $\eta$ satisfying these requirements and so that $|\nabla\eta|^{2}\leq \frac{4(1-\eta^{2})}{R^{2}}$ on $B_{2R}(0)$. 

Rearranging the stability inequality as in \cite[p.\ 125]{Fischer-Colbrie:1985}, we obtain, for any $\phi\in C^{\infty}_{c}(\Sigma)$,
\begin{equation}\label{eq:outer-stable}
-\int_{\Sigma} 2\kappa (\eta\phi)^{2}\leq \int_{\Sigma}|\nabla (\eta\phi)|^{2} = \int_{\Sigma} \eta^{2}|\nabla\phi|^{2}+ 2\eta\phi\bangle{\nabla\eta,\nabla\phi}+ \phi^{2}|\nabla\eta|^{2},
\end{equation}
and
\begin{equation}\label{eq:W12-bd-Q}
\int_{B_{R}(0)} |\nabla\phi|^{2}  \leq Q(\phi,\phi) +\underbrace{\left( \frac{8}{R^{2}} + \sup_{B_{2R}(0)}2|\kappa|\right)}_{:=C_{R}} \int_{B_{2R}(0)}  \phi^{2}.
\end{equation}
In particular, we have that
\begin{equation*}
- C_{R}(1+R^{2})^{\delta} \Vert\phi\Vert_{L^{2}_{-\delta}(\Sigma)}^{2}\leq -C_{R}\int_{\Sigma} \phi^{2} \leq Q(\phi,\phi).
\end{equation*}

Choose $R_{1}\geq R$ sufficiently large so that $k=\Index(\Sigma) = \Index (\Sigma\cap B_{\rho}(0))$ for $\rho > R_{1}$. Let $\{f_{1,\rho},\dots,f_{k,\rho}\}$ and $\{\lambda_{1,\rho},\dots,\lambda_{k,\rho}\}$ denote the $L^{2}_{-\delta}(B_{\rho}(0))$-Dirichlet eigenfunctions and eigenvalues respectively of an $L^{2}_{-\delta}(B_{\rho}(0))$-orthonomal basis, constructed by minimizing the Rayleigh quotient 
\begin{equation*}
Q(\phi,\phi)/\Vert\phi\Vert^{2}_{L^{2}_{-\delta}(\Sigma\cap B_{\rho}(0))}
\end{equation*}
(note that there are exactly $k$ such eigenfunctions since the $L^2$ and $L^2_{-\delta}$ norms are equivalent in $B_{\rho}$). It is not hard to check that this implies that 
\begin{equation*}
\Delta f_{i,\rho} - 2\kappa f_{i,\rho} + \lambda_{i,\rho}(1+|x|^{2})^{-\delta} f_{i,\rho} = 0.
\end{equation*}
Because $\max\{\lambda_{1,\rho},\dots,\lambda_{k,\rho}\}$ is decreasing with $\rho$, there exists $\epsilon_{0}>0$ so that $\lambda_{i,\rho} < -\epsilon_{0}$. On the other hand, the inequality we have just proven shows that $\lambda_{i,\rho}\geq -C_{R}(1+R^{2})^{\delta}$. 

Now, plugging $\phi = f_{i,\rho}$ into \eqref{eq:outer-stable} (extending $\phi$ to be zero outside of $B_{\rho}(0)$), we obtain as in \cite[p.\ 125]{Fischer-Colbrie:1985}:
\begin{align*}
-\int_{\Sigma} 2\kappa (\eta f_{i,\rho})^{2} &\leq  \int_{\Sigma} \eta^{2}|\nabla f_{i,\rho}|^{2}+ 2\eta f_{i,\rho}\bangle{\nabla\eta,\nabla f_{i,\rho}}+ f_{i,\rho}^{2}|\nabla\eta|^{2}\\
&=  \int_{\Sigma} \eta^{2}|\nabla f_{i,\rho}|^{2}+ \frac 1 2\bangle{\nabla\eta^{2},\nabla f_{i,\rho}^{2}}+ f_{i,\rho}^{2}|\nabla\eta|^{2}\\
&=  \int_{\Sigma} \eta^{2}|\nabla f_{i,\rho}|^{2}- \eta^{2}\left(  f_{i,\rho}\Delta  f_{i,\rho} + |\nabla  f_{i,\rho}|^{2} \right)+ f_{i,\rho}^{2}|\nabla\eta|^{2}\\
&=  \int_{\Sigma}  f_{i,\rho}^{2}|\nabla\eta|^{2}- \eta^{2} f_{i,\rho}\Delta  f_{i,\rho}\\
&=  \int_{\Sigma}  f_{i,\rho}^{2}|\nabla\eta|^{2}- 2\kappa(\eta f_{i,\rho})^{2} + \lambda_{i,\rho} (1+|x|^{2})^{-\delta}(\eta f_{i,\rho})^{2}.
\end{align*}
Hence,
\begin{align*}
\epsilon_{0}\Vert \eta f_{i,\rho}\Vert_{L^{2}_{-\delta}}^{2} & \leq (-\lambda_{i,\rho}) \Vert \eta f_{i,\rho}\Vert_{L^{2}_{-\delta}}^{2} \\
& \leq \int_{\Sigma}  f_{i,\rho}^{2}|\nabla\eta|^{2}\\
&  \leq \frac {36}{R^{2}} \Vert  f_{i,\rho}\Vert_{L^{2}(\Sigma)}^{2}\\
& \leq \frac {36}{R^{2}} (1+R^{2})^{\delta}\Vert  f_{i,\rho}\Vert_{L^{2}_{-\delta}(\Sigma) }^{2}\\
& = \frac {36}{R^{2}} (1+R^{2})^{\delta}.
\end{align*}
This implies that
\begin{equation*}
\int_{\Sigma\setminus B_{2R}(0)}(1+|x|^{2})^{-\delta}f_{i,\rho}^{2} \leq c R^{-2(1-\delta)},
\end{equation*}
for any $R \in [R_{0},\frac 12 \rho]$. On the other hand, \eqref{eq:W12-bd-Q} implies that
\begin{equation*}
\int_{B_{R}(0)} f_{i,\rho}^{2} + |\nabla f_{i,\rho}|^{2} \leq C_{R} \int_{B_{2R}(0)} f_{i,\rho}^{2} \leq C_{R}(1+R^{2})^{\delta}.
\end{equation*}
From this, the proof may be completed as in \cite[p.\ 126]{Fischer-Colbrie:1985}, using a diagonal argument along with the fact that $W^{1,2}(\Sigma\cap B_{R}(0))$ compactly embeds into $L^{2}_{-\delta}(\Sigma\cap B_{R}(0))$ for $R>0$ fixed. 
\end{proof}

\begin{lemm}\label{lem:nabla-f-L2}
For $\delta \in (0,1)$ and $f$ a $(-\delta)$-eigenfunction of the stability operator $L$ with eigenvalue $\lambda<0$, as constructed Proposition \ref{prop:weighted-FC}, we have that
\begin{equation*}
\int_{\Sigma} |\nabla f|^{2} < \infty.
\end{equation*}
\end{lemm}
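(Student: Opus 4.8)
The plan is to test the weighted eigenfunction equation $\Delta f - 2\kappa f + \lambda(1+|x|^2)^{-\delta}f = 0$ against $\varphi_R^2 f$, where $\varphi_R$ is the cutoff from Lemma \ref{lemm:cutoff-fct}, integrate by parts, and then let $R\to\infty$. Multiplying the equation by $\varphi_R^2 f$ and integrating gives
\begin{equation*}
\int_{\Sigma} \varphi_R^2 f \Delta f = \int_{\Sigma} 2\kappa \varphi_R^2 f^2 - \lambda \int_{\Sigma}(1+|x|^2)^{-\delta}\varphi_R^2 f^2,
\end{equation*}
and after integrating the left side by parts,
\begin{equation*}
\int_{\Sigma} \varphi_R^2 |\nabla f|^2 = -2\int_{\Sigma} \varphi_R f \bangle{\nabla\varphi_R, \nabla f} - 2\int_{\Sigma}\kappa\varphi_R^2 f^2 + \lambda\int_{\Sigma}(1+|x|^2)^{-\delta}\varphi_R^2 f^2.
\end{equation*}
The rightmost two terms are controlled: $|\kappa|\le C(1+|x|^2)^{-1}$ decays quadratically (so $\kappa f^2$ is integrable since $f\in L^2_{-\delta}$ with $\delta<1$ means $(1+|x|^2)^{-1}\le (1+|x|^2)^{-\delta}$), and the $\lambda$ term is bounded by $|\lambda|\|f\|_{L^2_{-\delta}}^2 < \infty$. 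So the only term that needs care is the cross term $\int \varphi_R f \bangle{\nabla\varphi_R, \nabla f}$.

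To handle the cross term, I would apply Cauchy--Schwarz: for any $\epsilon > 0$,
\begin{equation*}
2\left|\int_{\Sigma} \varphi_R f \bangle{\nabla\varphi_R, \nabla f}\right| \le \epsilon\int_{\Sigma}\varphi_R^2|\nabla f|^2 + \frac{1}{\epsilon}\int_{\Sigma} f^2 |\nabla\varphi_R|^2.
\end{equation*}
Absorbing the first term into the left-hand side (taking $\epsilon = \tfrac12$, say), we get
\begin{equation*}
\frac12\int_{\Sigma}\varphi_R^2|\nabla f|^2 \le 2\int_{\Sigma} f^2|\nabla\varphi_R|^2 + 2\int_{\Sigma}|\kappa|\varphi_R^2 f^2 + |\lambda|\int_{\Sigma}(1+|x|^2)^{-\delta}\varphi_R^2 f^2.
\end{equation*}
Since $\nabla\varphi_R$ is supported in $\Sigma\cap(B_{2R}(0)\setminus B_R(0))$ with $|\nabla\varphi_R|\le C/R$, and there $(1+|x|^2)^{-\delta}\ge c R^{-2\delta}$, we bound $\int f^2|\nabla\varphi_R|^2 \le \frac{C}{R^2}\cdot CR^{2\delta}\int_{B_{2R}\setminus B_R}(1+|x|^2)^{-\delta}f^2 \le CR^{2\delta - 2}\|f\|_{L^2_{-\delta}}^2 \to 0$ as $R\to\infty$ (using $\delta < 1$). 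Hence the right-hand side stays bounded uniformly in $R$, and by monotone convergence ($\varphi_R\uparrow 1$), $\int_\Sigma |\nabla f|^2 < \infty$.

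The main obstacle — really the only one requiring thought — is controlling the cross term $\int\varphi_R f\bangle{\nabla\varphi_R,\nabla f}$ before we know $\nabla f\in L^2$; the Cauchy--Schwarz-with-absorption trick is what makes this work, and it crucially relies on the decay rate built into the cutoff (property (4) of Lemma \ref{lemm:cutoff-fct}) together with the restriction $\delta < 1$ so that $R^{2\delta-2}\to 0$. One should double-check that all integration by parts is justified for $f$ of class $C^2$ (which follows from elliptic regularity applied to the eigenfunction equation, the coefficients $\kappa$ and $(1+|x|^2)^{-\delta}$ being smooth) and that the intermediate quantities $\int\varphi_R^2|\nabla f|^2$ are finite for each fixed $R$ (true since $\varphi_R$ has compact support), so that the absorption step is legitimate. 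Alternatively, one could quote property (5) of Lemma \ref{lemm:cutoff-fct} to handle a $\varphi_R\Delta\varphi_R$ term if one instead expands $|\nabla(\varphi_R f)|^2$, but the absorption argument above is cleaner and self-contained.
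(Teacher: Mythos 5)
Your proof is correct and follows the same basic strategy as the paper: test the weighted eigenfunction equation against $\varphi_R^2 f$, integrate by parts, and use $|\kappa|\leq C(1+|x|^2)^{-1}$, the $L^2_{-\delta}$ bound, and $\delta<1$ to control everything uniformly in $R$. The one point of divergence is the cross term $\int_\Sigma \varphi_R f\bangle{\nabla\varphi_R,\nabla f}$: the paper rewrites it as $\tfrac12\int_\Sigma\bangle{\nabla\varphi_R^2,\nabla f^2}$ and integrates by parts a second time, which produces the $\varphi_R\Delta\varphi_R$ term and requires property (5) of Lemma \ref{lemm:cutoff-fct}, whereas you use Cauchy--Schwarz with absorption, which needs only the gradient bound (4). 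Both work; your version is marginally more self-contained (and is the standard Caccioppoli device), while the paper's version has the small advantage of producing an identity rather than an inequality and of being the exact computation it reuses in Lemma \ref{lemm:nabla-omega-l2}. Your parenthetical cautions --- that $f$ is smooth by elliptic regularity and that $\int\varphi_R^2|\nabla f|^2<\infty$ for each fixed $R$ so the absorption is legitimate --- are exactly the right things to check.
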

\begin{proof}
For $R>0$ chosen sufficiently large, consider the cutoff function $\varphi_{R}$ constructed in Lemma \ref{lemm:cutoff-fct}. We compute
\begin{align*}
\int_{\Sigma} \varphi_{R}^{2}|\nabla f|^{2} & = - \int_{\Sigma} \varphi_{R}^{2} f \Delta f + 2 \varphi_{R} f\bangle{\nabla \varphi_{R},\nabla f}\\
& = \lambda \int_{\Sigma} \varphi_{R}^{2} f^{2} (1+|x|^{2})^{-\delta} - \int_{\Sigma}\varphi_{R}^{2} 2\kappa f^{2}  - \int_{\Sigma}(\varphi_{R} \Delta \varphi_{R} + |\nabla \varphi_{R}|^{2})f^{2}
\end{align*}
The first integral tends to $\Vert f\Vert_{L^{2}_{-\delta}(\Sigma)}^{2}<\infty$ as $R\to\infty$. The second integral is bounded as $R\to\infty$, because $|\kappa| \leq C(1+|x|^{2})^{-1}$ and $\delta < 1$. Finally, using the bounds on the derivatives of $\varphi_{R}$ obtained in Lemma \ref{lemm:cutoff-fct}, the third integral is actually tending to zero:
\begin{align*}
 \int_{\Sigma}(|\varphi_{R} \Delta \varphi_{R}| + |\nabla \varphi_{R}|^{2})f^{2} & \leq \frac{C}{R^{2}} \int_{\Sigma\cap (B_{2R}(0)\setminus B_{R}(0))} f^{2}\\
&   \leq \frac{C}{R^{2}} (1+4R^{2})^{\delta}\int_{\Sigma\cap (B_{2R}(0)\setminus B_{R}(0))} f^{2} (1+|x|^{2})^{-\delta}\\
& =  \frac{C}{R^{2}} (1+4R^{2})^{\delta} \Vert f\Vert_{L^{2}_{-\delta}(\Sigma)}^{2},
\end{align*}
which tends to zero as $R \to \infty$. 
\end{proof}

\section{Ends and harmonic $1$-forms}\label{sec:ends-1form}
An essential observation is that because we are considering a space which is slightly bigger than $L^{2}(\Sigma)$, the ends of $\Sigma$ give rise to extra harmonic $1$-forms which can be used as test functions in the index operator. We denote by $\sH^{1}(\Sigma)$ the set of harmonic $1$-forms on $\Sigma$. Note that no decay assumptions are imposed on $\sH^{1}(\Sigma)$. 

We will use $x_{1},x_{2},x_{3}$ as the Euclidean coordinates on $\RR^{3}$ and we will often rotate $\Sigma$ so that an end in question has limiting normal vector $(0,0,1)$. In this case, we will write $x'=(x_{1},x_{2},0)$ for the point $x$ projected to the $\{x_{3}=0\}$ plane. 
\begin{lemm}
Fix $\delta > 0$. If $\Sigma$ has $r$ ends, there exists $2r-2$ linearly independent harmonic $1$-forms on $\Sigma$, $\omega_{1},\dots,\omega_{2r-2}$ so that each $\omega_{i} \in L^{2}_{-\delta}(\Sigma) \cap \sH^{1}(\Sigma)$ and so that no non-trivial linear combination of the $\omega_{i}$ is in $L^{2}(\Sigma)$. 
\end{lemm}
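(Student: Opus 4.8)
The plan is to use the conformal model of $\Sigma$ as a compact Riemann surface $\overline{\Sigma}$ of genus $g$ punctured at $r$ points $p_1,\dots,p_r$ corresponding to the ends, and to build harmonic $1$-forms on $\Sigma$ from meromorphic differentials on $\overline{\Sigma}$. Recall that $\Sigma$ has finite total curvature, so by Osserman's theorem this conformal picture is available and, moreover, near each puncture $p_j$ a conformal coordinate $z$ can be chosen so that the induced metric is comparable to $|z|^{-2k_j}|dz|^2$ for some integer $k_j \ge 1$ (the ``order'' of the end; $k_j=1$ for an embedded planar end, $k_j=2$ for a catenoidal end, etc.), with $|x| \sim |z|^{-k_j}$ as $z \to 0$. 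The key point is that $L^2_{-\delta}$ membership is a mild condition near the punctures: a $1$-form that in the coordinate $z$ looks like $z^{-1}\,dz$ (a simple pole, hence infinite $L^2$ norm since $\int |z|^{-2}\,|dz|^2 = \infty$) will nevertheless lie in $L^2_{-\delta}$ because the weight $(1+|x|^2)^{-\delta} \sim |z|^{2\delta k_j}$ improves the integrand to $|z|^{2\delta k_j - 2}|dz|^2$, which is integrable once $\delta k_j > 0$, i.e. always.

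The construction itself: on $\overline{\Sigma}$ consider meromorphic $1$-forms whose only poles are simple poles at the $p_j$ with purely imaginary residues (so that the real part is a well-defined harmonic $1$-form on $\Sigma$, not just locally but globally — the imaginary residue condition is exactly what kills the multivaluedness coming from $\log|z|$ type primitives). By the Riemann--Roch theorem (or directly from the theory of differentials of the third kind on a compact Riemann surface), the space of meromorphic differentials with at worst simple poles at $p_1,\dots,p_r$ has dimension $g + r$ when $r \ge 1$ (the $g$-dimensional space of holomorphic differentials plus an $(r-1)$-dimensional space of genuine third-kind differentials, since the sum of residues must vanish). Imposing that all $r$ residues be real cuts out, over $\RR$, a real subspace; taking real parts of differentials with purely imaginary residues, together with the real and imaginary parts of the $g$ holomorphic differentials, produces a real vector space of harmonic $1$-forms of dimension $2g + (r-1)$. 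Among these, the $2g$ coming from holomorphic differentials are the classical $L^2$ harmonic forms (finite energy, extend across the punctures), and the remaining $r-1$ have genuine simple poles at the punctures. I would then select the $2r-2$ forms as follows: actually the cleanest route is to note that we only need $2r-2$ forms, so it suffices to work with the real and imaginary parts — wait, the third-kind differentials give $r-1$ real forms with poles; to get $2r-2$ I should instead not impose the reality constraint but rather take real and imaginary parts of all third-kind differentials with arbitrary residues, modulo the holomorphic ones. More precisely: the $\RR$-span of $\{\re\,\omega, \im\,\omega : \omega \text{ meromorphic with simple poles at the } p_j\}$ modulo exact forms and modulo the contribution that would make them multivalued needs care; the honest count is that the first de Rham cohomology of $\Sigma$ (a genus $g$ surface with $r$ punctures, homotopy equivalent to a wedge of $2g + r - 1$ circles) has dimension $2g + r - 1$, and each class has a harmonic representative, of which $2g$ are $L^2$ and the remaining $r-1$ are not. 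That gives $r-1$, not $2r-2$ — so the factor $2$ must come from also using the ``dual'' forms, i.e. if $\omega$ is a non-$L^2$ harmonic form then its conjugate $*\omega$ (Hodge star with respect to the $\Sigma$-metric) is also harmonic and I claim also non-$L^2$ and still in $L^2_{-\delta}$; since $*$ is conformally invariant on $1$-forms in dimension $2$, $*\omega$ is computed in the conformal coordinate and near a simple pole $dz/z$ we get (up to constants) $i\,dz/z$, which has the same $|z|$-growth, hence the same integrability. Thus from the $r-1$ non-$L^2$ forms and their $r-1$ Hodge duals I obtain $2r-2$ harmonic forms, and I would verify they are linearly independent and that no nontrivial linear combination lies in $L^2$ by examining their residue/leading-coefficient data at the punctures.

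The main steps, in order: (i) recall/cite the finite-total-curvature conformal structure and the metric asymptotics $|x| \sim |z|^{-k_j}$ near each puncture; (ii) establish the $L^2_{-\delta}$ bound — show that a harmonic $1$-form with at most a simple pole at each $p_j$ (in the conformal coordinate) has $\int_\Sigma (1+|x|^2)^{-\delta}|\omega|^2 < \infty$, by the local computation $\int_{|z|<\epsilon} |z|^{2\delta k_j}|z|^{-2}\,\frac{i}{2}dz\wedge d\bar z < \infty$ for all $\delta > 0$, together with the fact that away from the punctures $\Sigma$ is a compact surface with a smooth form so there is no issue; (iii) produce, via Riemann--Roch / third-kind differentials with prescribed imaginary residues and the holomorphic differentials and Hodge duals, a space of harmonic $1$-forms on $\Sigma$ of the correct dimension, of which exactly a $2r-2$-dimensional subspace consists of forms with nontrivial singular behavior at the ends; (iv) check that within this $2r-2$-dimensional family no nonzero linear combination is $L^2(\Sigma)$ — equivalently that the ``singular parts at the punctures'' map is injective on this subspace — which follows because an $L^2$ harmonic $1$-form on a finite-total-curvature minimal surface extends to a holomorphic/antiholomorphic differential on $\overline{\Sigma}$, i.e. has no pole, so any $L^2$ form in our family would have to have vanishing singular data and hence be zero. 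The hardest part, I expect, is step (iii): getting the bookkeeping exactly right so that the count of harmonic forms with nonremovable singularities at the ends comes out to precisely $2r-2$ and not $r-1$ — this hinges on correctly pairing each third-kind-type form with its Hodge conjugate (or, equivalently, with a form having the ``orthogonal'' residue), and on being careful that the residue/monodromy constraints needed for global single-valuedness on $\Sigma$ don't secretly halve the dimension again.
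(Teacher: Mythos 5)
Your proposal is correct and follows essentially the same route as the paper: both pass to the conformal compactification, produce $r-1$ differentials of the third kind with simple poles at the punctures (the paper takes the Farkas--Kra differentials $\tilde\omega_{1j}$), double the count by taking conjugates/Hodge duals, verify $L^{2}_{-\delta}$ membership from the asymptotics $|x|\sim |z|^{-k_j}$, and rule out $L^{2}$ linear combinations via the removable-singularity/residue argument at the punctures. The only wrinkles are cosmetic: the space of meromorphic differentials with at worst simple poles at the $r$ punctures has dimension $g+r-1$ (not $g+r$), and the detour through purely imaginary residues is unnecessary, since the real and imaginary parts of a meromorphic differential are single-valued $1$-forms regardless of the residues (only their primitives are multivalued).
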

\begin{proof}
By \cite[p.\ 51]{FK}, for two distinct points $p_{i},p_{j} \in \overline \Sigma$ and local holomorphic coordinates $z_{i},z_{j}$ vanishing at $p_{i},p_{j}$, there exists $\tilde\omega_{ij}$, a meromorphic (complex) abelian differential which has a $\frac{dz_{i}}{z_{i}}$ singularity at $p_{i}$ and a $-\frac{dz_{j}}{z_{j}}$ singularity at $p_{j}$, and which is holomorphic on $\overline\Sigma\setminus\{p_{i},p_{j}\}$. If $p_{i},p_{j}$ are the punctures in $\overline\Sigma$ corresponding to ends of $\Sigma$, it is not hard to see that $\{\tilde \omega_{12},\tilde\omega_{13},\dots,\tilde\omega_{1r}\}$ is a $\CC$-linearly independent set of $r-1$ holomorphic differentials on $\Sigma$. As in \cite[Proposition III.2.7]{FK}, taking the complex conjugate to obtain anti-holomorphic differentials, we may find an $\RR$-linearly independent set of $2r-2$ harmonic differentials $\{\omega_{1},\dots,\omega_{2r-2}\}$. 

Now, we will show that these forms $\tilde\omega$ are in $L^{2}_{-\delta}(\Sigma)$ for any $\delta >0$. Suppose that $X:D\setminus\{0\}\to\RR^{3}$ is a conformal parametrization of an end $E$. If $z$ is a local coordinate on $D$, then the Weierstrass representation implies that
\begin{equation*}
X(z) = \re \int (\phi_{1},\phi_{2},\phi_{3}),
\end{equation*}
where $(\phi_{1},\phi_{2},\phi_{3}) = \frac 12 ((g^{-1}-g)dh,i(g^{-1}+g)dh,2dh)$ for a meromorphic function $g$ and $1$-form $dh$ on $D$. We claim that $\phi_{1},\phi_{2}$ both have a pole of the same order (which is at least two) at $0$ and that $\phi_{3}$ has a pole of lower order. This follows from well known arguments (cf.\ \cite[Proposition 2.1]{HoffmanKarcher}) which we now recall. The induced metric on the end $E$ may be written as
\begin{equation*}
(|\phi_{1}|^{2} + |\phi_{2}|^{2} + |\phi_{3}|^{2})|dz|^{2}.
\end{equation*}
By completeness at the end, at least one of the $\phi_{i}$ must have a pole at $z=0$. If no $\phi_{i}$ has a pole of order larger than one, then we may write, for $i=1,2,3$,
\begin{equation*}
\phi_{i}(z) = \frac{a_{i}}{z} + b_{i}(z)
\end{equation*}
where $a_{i}\in \CC$ and $b_{i}(z)$ are holomorphic on $D$. By assumption, at least one of the $a_{i}$ are nonzero. Because $\log z = \log |z| + i\arg z$, $X(z)$ will not be well defined unless $a_{i} \in \RR$ for $i=1,2,3$. However, the explicit form of the Weierstrass representation implies that 
\begin{equation*}
\phi_{1}^{2} + \phi^{2}_{2} + \phi_{3}^{2}\equiv 0,
\end{equation*}
which could only happen if $a_{1}=a_{2}=a_{3}=0$, which contradicts completeness of the end $E$. We may assume that $g(0) = 0$; then, by the explicit form of the Weierstrass representation $\phi_{1},\phi_{2}$ have a pole of the same order (at least two), which is of higher order than the pole of $\phi_{3}$. In particular,
\begin{align*}
\phi_{1}(z) & = \frac{A}{z^{k+1}} + \frac{B_{1}(z)}{z^{k}}\\
\phi_{2}(z) & = \frac{iA}{z^{k+1}} + \frac{B_{2}(z)}{z^{k}}\\
\phi_{3}(z) & = \frac{B_{3}(z)}{z^{k}},
\end{align*}
where $A \in \CC$ and the functions $B_{i}(z)$ are holomorphic on $D$ and $k\geq 1$. Integrating this, we see that shrinking $D$ if necessary, there is a constant $C>0$ so that
\begin{equation*}
 C^{-1} |z|^{-k} \leq |X(z)| \leq C|z|^{-k}.
\end{equation*}

Now, using the fact that the squared norm of a $1$-form times the volume element is a pointwise conformally invariant quantity, we compute
\begin{equation*}
\left\Vert\frac{dz}{z} \right\Vert_{L^{2}_{-\delta}(D\setminus\{0\})}^{2} \leq \int_{D\setminus\{0\}} \frac{1}{|z|^{2}} (1+C^{-1}|z|^{-k})^{-\delta} < \infty,
\end{equation*}
for $\delta >0$. Because this computation applies for each end, we see that the forms constructed above all lie in $L^{2}_{-\delta}(\Sigma)$. 
%We have assumed that each end of $\Sigma$ is embedded, so to show this, it is sufficient to show that if $D\setminus\{0\}$ is a conformal parametrization of an embedded end $E$ with local holomorphic coordinate $w=u+iv$, then $\frac{dw}{w}$ corresponds to a $1$-form with finite $L^{2}_{-\delta}$-integral on the associated end. By \cite[p.\ 801]{Schoen:symmetry}, if we assume that the Gauss map limits to $(0,0,1)$ by rotating $\Sigma$, then we have the relation (scaling $\Sigma$ by a homothety, if necessary)
%\begin{equation*}
%u = \frac{x_{1}}{|x'|^{2}} + O(|x'|^{-3}) \qquad v = \frac{x_{2}}{|x'|^{2}} + O(|x'|^{-3}).
%\end{equation*}
%Here, $x_{1},x_{2}$ are the coordinates on $\Pi=\{x_{3}=0\}$; note that $|x'|^{2}= x_{1}^{2}+x_{2}^{2}$ is the Euclidean distance in $\Pi$ to $(x_{1},x_{2})$. It is clear that the error terms in the previous expression may be differentiated, with the rate of decay increasing by one. Now, from
%\begin{equation*}
%w = \frac{x_{1} + i x_{2}}{|x'|^{2}} + O(|x'|^{-3}),
%\end{equation*}
%we see that
%\begin{equation*}
%\frac{dw}{w} = \frac{-(x_{1}dx_{1}+x_{2}dx_{2}) + i (x_{1}dx_{2}-x_{2}dx_{1})}{|x'|^{2}} + O(|x'|^{-4}).
%\end{equation*}
%Because the end has bounded slope, we thus see that the norm of this form on $\Sigma$ is bounded as
%\begin{equation*}
%\left|\frac{dw}{w} \right| \leq C |x'|^{-1},
%\end{equation*}
%so 
%\begin{equation*}
%\left\Vert \frac{dw}{w}\right\Vert_{L^{2}_{-\delta}(E)}^{2}\leq C \int_{E} |x'|^{-1}(1+|x|^{2})^{-\delta} \leq C \int_{\Pi} ( 1+|x|^{2})^{-\frac 12 - \delta} < \infty,
%\end{equation*}
%because $\delta >0$. 

Finally, note that if some non-trivial linear combination $\omega$ of the $\omega_{i}$'s is in $L^{2}(\Sigma)$, then by conformal invariance of the $L^{2}$-norm, we have that $\omega \in L^{2}(\overline \Sigma \setminus\{p_{1},\dots,p_{r}\})$. However, a harmonic form with bounded $L^{2}$-norm away from a point singularity extends across the singularity, so $\omega$ must necessarily extend to $\overline \Sigma$. From this, it is clear that such a linear combination could not exist. 
\end{proof}

\begin{coro}\label{coro:weighted-harm}
Fix $\delta>0$. If $\Sigma$ has genus $g$ and $r$ ends, then we may find a $2(g+r-1)$-dimensional subspace $V \subset L^{2}_{-\delta}(\Sigma) \cap \sH^{1}(\Sigma)$.
\end{coro}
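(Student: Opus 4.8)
The plan is to splice together two families of harmonic $1$-forms: the $2r-2$ weighted harmonic forms $\omega_1,\dots,\omega_{2r-2}$ produced in the preceding lemma, and the $2g$ honest $L^2$ harmonic forms coming from the genus of $\overline\Sigma$, and then to verify that the combined list of $2g+(2r-2)=2(g+r-1)$ forms is $\RR$-linearly independent.

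First I would recall that $\Sigma$ is conformally $\overline\Sigma\setminus\{p_1,\dots,p_r\}$ for a compact Riemann surface $\overline\Sigma$ of genus $g$, and that $H^0(\overline\Sigma,\Omega^1)$ has complex dimension $g$. Fixing a $\CC$-basis $\beta_1,\dots,\beta_g$ of holomorphic $1$-forms on $\overline\Sigma$ and passing to real and imaginary parts of the $\beta_j$ (restricted to $\Sigma$), I obtain $2g$ real harmonic $1$-forms $\eta_1,\dots,\eta_{2g}\in\sH^1(\Sigma)$. These are $\RR$-linearly independent: a real relation $\sum_i a_i\eta_i=0$ amounts to an identity $\mu+\bar\nu=0$ with $\mu,\nu$ holomorphic $1$-forms, and comparing $(1,0)$- and $(0,1)$-parts forces $\mu=\nu=0$, hence all $a_i=0$ since the $\beta_j$ form a basis. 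Moreover each $\eta_i$ extends smoothly across the punctures to $\overline\Sigma$, so by the pointwise conformal invariance of the squared norm of a $1$-form times the area element (used already in the preceding lemma) we have $\eta_i\in L^2(\Sigma)$; since $(1+|x|^2)^{-\delta}\le 1$ this gives $L^2(\Sigma)\subset L^2_{-\delta}(\Sigma)$, so $\eta_i\in L^2_{-\delta}(\Sigma)\cap\sH^1(\Sigma)$.

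Next I would invoke the preceding lemma for the forms $\omega_1,\dots,\omega_{2r-2}\in L^2_{-\delta}(\Sigma)\cap\sH^1(\Sigma)$ and use its key property that no nontrivial linear combination of them lies in $L^2(\Sigma)$. I then claim $\eta_1,\dots,\eta_{2g},\omega_1,\dots,\omega_{2r-2}$ are $\RR$-linearly independent. Indeed, from $\sum_i a_i\eta_i+\sum_j b_j\omega_j=0$ we get $\sum_j b_j\omega_j=-\sum_i a_i\eta_i\in L^2(\Sigma)$, so the defining property of the $\omega_j$ forces $b_1=\dots=b_{2r-2}=0$; then $\sum_i a_i\eta_i=0$ forces all $a_i=0$ by the independence established above. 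Setting $V:=\Span_{\RR}\{\eta_1,\dots,\eta_{2g},\omega_1,\dots,\omega_{2r-2}\}$ yields a subspace of $L^2_{-\delta}(\Sigma)\cap\sH^1(\Sigma)$ of dimension $2(g+r-1)$.

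The only points needing care are bookkeeping: that the $2g$ real forms extracted from a $\CC$-basis of $H^0(\overline\Sigma,\Omega^1)$ are genuinely $\RR$-independent, and the clean separation of the two families according to membership in $L^2(\Sigma)$. I do not expect a serious obstacle, since all the analytic content — the construction of the weighted forms and their non-extension across the punctures — has already been carried out in the preceding lemma.
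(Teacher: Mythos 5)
Your proposal is correct and is essentially the paper's own argument: the paper likewise identifies $L^{2}(\Sigma)\cap\sH^{1}(\Sigma)$ with the $2g$-dimensional space of harmonic $1$-forms on $\overline\Sigma$ and adjoins the $2r-2$ forms from the preceding lemma, using the fact that no nontrivial combination of the latter is in $L^{2}(\Sigma)$ to conclude linear independence. Your version merely spells out the bookkeeping (real/imaginary parts of a basis of holomorphic forms, the inclusion $L^{2}\subset L^{2}_{-\delta}$) in more detail than the paper's two-line proof.
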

\begin{proof}
It is clear that $L^{2}(\Sigma)\cap \cH^{2}(\Sigma)$ has $\RR$-dimension $2g$. This is because $L^{2}(\Sigma)\cap \cH^{2}(\Sigma)$ corresponds to the harmonic $1$-forms on $\overline \Sigma$, as discussed in the previous proof. Moreover, no non-trivial linear combination of the $\omega_{i}$'s constructed in the previous lemma can be in $L^{2}(\Sigma)$. This establishes the claim.
\end{proof}

\begin{lemm}\label{lemm:coordinate-diff-weights}
Rotating $\Sigma$ if necessary, we may ensure that $*dx_{1},*dx_{2}\not \in L^{2}_{-\delta}(\Sigma)$ for $\delta \in (0,\frac 12)$.
\end{lemm}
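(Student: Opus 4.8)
The plan is to show that the harmonic $1$-forms $*dx_1$ and $*dx_2$ fail to be in $L^2_{-\delta}(\Sigma)$ for $\delta\in(0,\tfrac12)$ provided we rotate $\Sigma$ generically, by analyzing the behavior of these forms near each end. Recall from the Weierstrass setup in the previous lemma that on an end $E$ with conformal parameter $z$ on $D\setminus\{0\}$, the coordinate functions satisfy $dx_j = \re\,\phi_j$, where $\phi_1,\phi_2$ have a pole of order $k+1\geq 2$ and $\phi_3$ has a pole of lower order. Since $*$ is a pointwise conformal isometry on $1$-forms, $\|*dx_j\|^2_{L^2_{-\delta}(E)} = \|dx_j\|^2_{L^2_{-\delta}(E)}$, and the integrand is comparable to $|\re\,\phi_j|^2 (1+|X(z)|^2)^{-\delta}\,|dz|^2$. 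Using $|X(z)|\asymp |z|^{-k}$ near $0$, the weight is $\asymp |z|^{2k\delta}$, so the contribution of the end is
\begin{equation*}
\int_{D\setminus\{0\}} |\re\,\phi_j(z)|^2 \, |z|^{2k\delta}\,|dz|^2,
\end{equation*}
and near $z=0$ the leading term of $\phi_j$ is $c_j z^{-(k+1)}$ with $|c_j|$ comparable to $|A|>0$ for $j=1,2$. So $|\re\,\phi_j|^2 \asymp |z|^{-2(k+1)}$ up to the oscillation of $\re$, and the integrand behaves like $|z|^{-2(k+1)+2k\delta}$, whose radial integral $\int_0 \rho^{-2(k+1)+2k\delta}\,\rho\,d\rho = \int_0 \rho^{-2k-1+2k\delta}\,d\rho$ diverges exactly when $2k - 2k\delta \geq 0$, i.e. when $\delta \leq 1$. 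Since we are in the range $\delta<\tfrac12<1$, the integral diverges at every end — \emph{unless} the real part $\re(c_j z^{-(k+1)})$ happens to vanish to extra order, which cannot occur for a single monomial: $\re(c z^{-(k+1)})$ is a nonzero harmonic polynomial in $1/z$ and its $L^2$-average over circles $|z|=\rho$ is $\asymp |c|\rho^{-(k+1)}$.

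The remaining subtlety is that, a priori, $dx_1$ or $dx_2$ might vanish \emph{identically to high order} at some end, i.e. the constant $A$ in the expansion of $\phi_1,\phi_2$ could be zero — this happens precisely when the limiting normal of that end is horizontal. Here is where the rotation comes in. The forms $dx_1, dx_2, dx_3$ span (over $\RR$) the restrictions to $\Sigma$ of the coordinate differentials of $\RR^3$; under a rotation $O\in SO(3)$ the triple $(dx_1,dx_2,dx_3)$ transforms linearly by $O$, and the set of rotations for which the limiting normal $\nu(p_i)$ of \emph{some} end becomes orthogonal to $e_1$ or to $e_2$ is a finite union of proper real-algebraic subsets of $SO(3)$ (one codimension-one condition per end per coordinate), hence has empty interior. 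So for a generic (in fact, all but a measure-zero set of) rotation, at \emph{every} end the pole orders of the rotated $\phi_1$ and $\phi_2$ are $k+1\geq 2$ as in the previous lemma, with nonzero leading coefficient, and the divergence computation above applies at every end. Summing the (positive) end contributions gives $\|*dx_1\|_{L^2_{-\delta}(\Sigma)} = \|*dx_2\|_{L^2_{-\delta}(\Sigma)} = \infty$.

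The step I expect to require the most care is the lower bound on $|\re\,\phi_j|^2$ integrated over small circles: one must confirm that the subleading holomorphic and lower-order pole terms $B_i(z)/z^k$ cannot cancel the leading $A/z^{k+1}$ term in an $L^2$-on-circles sense. This is where the Fourier-orthogonality of distinct monomials $z^{-(k+1)}$ and $z^{-k}, z^{-k+1},\dots$ on $|z|=\rho$ is used: writing $\phi_j(z) = \sum_{m\geq -(k+1)} a_m^{(j)} z^m$, Parseval on each circle gives $\int_{|z|=\rho}|\re\,\phi_j|^2 \asymp \sum_m |a_m^{(j)}|^2 \rho^{2m+1}$, and the $m=-(k+1)$ term alone forces $\int_{|z|=\rho}|\re\,\phi_j|^2 \gtrsim |A|^2 \rho^{-2k-1}$, which suffices. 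Everything else — the conformal invariance of $*$, the comparability $|X(z)|\asymp|z|^{-k}$ (already established), and the genericity of the rotation — is routine.
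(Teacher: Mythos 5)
Your proposal is correct in substance but takes a genuinely different route from the paper's. The paper's argument is purely geometric and much shorter: rotate $\Sigma$ so that a \emph{single} chosen end $E$ has limiting normal $(0,0,1)$; on that end the tangent planes are nearly horizontal, so $|{*}dx_{1}|=|dx_{1}|\geq\frac12$ and likewise for $dx_{2}$ pointwise, and since $E$ projects onto $\Pi=\{x_{3}=0\}$ with a bounded number of preimages, $\Vert{*}dx_{j}\Vert^{2}_{L^{2}_{-\delta}(E)}\geq C\int_{\Pi}(1+|x'|^{2})^{-\delta}=\infty$. No Weierstrass data is needed, and one good end suffices. Your route instead runs the divergence through the Laurent expansion of the Weierstrass data at each puncture and uses a genericity argument in $SO(3)$ to make \emph{every} end good; this works, costs more bookkeeping, and buys a little more (the bad rotations form a proper algebraic subset, and you get the divergence rate in terms of the pole order $k$).

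Two corrections to your write-up. First, your description of the bad set is backwards: the leading coefficient of $\phi_{j}$ ($j=1,2$) at an end vanishes precisely when the limiting normal is \emph{parallel} to $e_{j}$, not orthogonal to it, since $|dx_{j}|_{\Sigma}|\to\sqrt{1-\nu_{j}^{2}}$ at the end; indeed, ``normal orthogonal to $e_{1}$ and $e_{2}$'' is exactly the good configuration the paper arranges. The genericity conclusion survives once corrected, as you are still avoiding finitely many positive-codimension conditions. Second, the circle-Parseval step is unnecessary: for a meromorphic $1$-form $f\,dz$ with $f=u+iv$ one has $\re(f\,dz)=u\,dx-v\,dy$, hence the pointwise identity $|\re(f\,dz)|^{2}_{|dz|^{2}}=|f|^{2}$; so the integrand is exactly $|f_{j}|^{2}(1+|X|^{2})^{-\delta}$ and the lower bound $\geq c|A|^{2}|z|^{-2(k+1)}|z|^{2k\delta}$ is immediate, with no possible cancellation from taking real parts.
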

\begin{proof}
Fixing an end $E$ of $\Sigma$, we may assume that the Gauss map limits to $(0,0,1)$ along $E$ by rotating $\Sigma$. From this, taking $E$ further out if necessary, we may arrange that $*dx_{1}$ and $*dx_{2}$ have norm at least $\frac 12$ and that $|\nu \cdot \frac{\partial}{\partial x_{3}}| \geq \frac 12$ along $E$. Because the blow-down of $E$ is a plane of finite multiplicity, the projection $\pi:E\to \Pi=\{x_{3}=0\}$ has a uniformly bounded number of pre-images. Putting these facts together, we obtain
\begin{equation*}
\Vert \mbox{$\ast$}dx_{1}\Vert_{L^{2}_{-\delta}(E)}^{2} \geq C \int_{\Pi} (1+|x|^{2})^{-\delta} = \infty,
\end{equation*}
for $\delta < \frac 12 $, and similarly for $*dx_{2}$. 
\end{proof}

\begin{lemm}\label{lemm:nabla-omega-l2}
For $\delta \in (0,1)$ and $\omega \in L^{2}_{-\delta}(\Sigma)\cap \cH^{1}(\Sigma)$, we have that
\begin{equation*}
\int_{\Sigma} |\nabla\omega|^{2} < \infty.
\end{equation*}
\end{lemm}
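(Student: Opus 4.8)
The plan is to mimic the proof of Lemma \ref{lem:nabla-f-L2} using the cutoff function $\varphi_R$ from Lemma \ref{lemm:cutoff-fct}, but with the Bochner/Weitzenböck formula for harmonic $1$-forms on a surface playing the role that the eigenfunction equation played there. Recall that on a surface with Gauss curvature $\kappa$, a harmonic $1$-form $\omega$ satisfies the Bochner identity $\tfrac12 \Delta |\omega|^2 = |\nabla \omega|^2 + \kappa |\omega|^2$. First I would test this against $\varphi_R^2$ and integrate by parts:
\begin{equation*}
\int_\Sigma \varphi_R^2 |\nabla\omega|^2 = -\int_\Sigma \varphi_R^2 \kappa |\omega|^2 + \tfrac12 \int_\Sigma \varphi_R^2 \Delta|\omega|^2 = -\int_\Sigma \varphi_R^2 \kappa |\omega|^2 - \tfrac12\int_\Sigma \bangle{\nabla \varphi_R^2, \nabla |\omega|^2}.
\end{equation*}
(An alternative bookkeeping, avoiding $\Delta|\omega|^2$ directly, is to pair $\Delta\omega = 0$ — i.e. $(dd^*+d^*d)\omega=0$ — against $\varphi_R^2 \omega$ using the Weitzenböck formula $\Delta_{\mathrm{Hodge}} = \nabla^*\nabla + \Ric$, with $\Ric = \kappa$ on a surface; this yields the same identity.)

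Next I would estimate the three resulting terms as $R\to\infty$, exactly paralleling Lemma \ref{lem:nabla-f-L2}. The curvature term is bounded uniformly in $R$: since $|\kappa|\leq C(1+|x|^2)^{-1}$ and $\delta<1$, we have $\int_\Sigma \varphi_R^2|\kappa|\,|\omega|^2 \leq C\int_\Sigma (1+|x|^2)^{-1}|\omega|^2 \leq C\int_\Sigma (1+|x|^2)^{-\delta}|\omega|^2 = C\Vert\omega\Vert_{L^2_{-\delta}(\Sigma)}^2 < \infty$. For the gradient term, I would use $\bangle{\nabla\varphi_R^2,\nabla|\omega|^2} = 2\varphi_R \bangle{\nabla\varphi_R, \nabla|\omega|^2}$ and then Cauchy--Schwarz together with the pointwise bound $|\nabla|\omega|^2| \leq 2|\omega||\nabla\omega|$ (Kato), giving $\left|\int_\Sigma \varphi_R\bangle{\nabla\varphi_R,\nabla|\omega|^2}\right| \leq 2\int_\Sigma \varphi_R |\nabla\varphi_R|\,|\omega|\,|\nabla\omega| \leq \tfrac12\int_\Sigma \varphi_R^2|\nabla\omega|^2 + 2\int_\Sigma |\nabla\varphi_R|^2|\omega|^2$. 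The first piece is absorbed into the left-hand side, and the second piece is controlled by $|\nabla\varphi_R|\leq C/R$, supported in $B_{2R}\setminus B_R$, so $\int_\Sigma |\nabla\varphi_R|^2|\omega|^2 \leq \tfrac{C}{R^2}\int_{\Sigma\cap(B_{2R}\setminus B_R)}|\omega|^2 \leq \tfrac{C}{R^2}(1+4R^2)^\delta \Vert\omega\Vert_{L^2_{-\delta}(\Sigma)}^2 \to 0$ since $\delta<1$ — the same computation as in Lemma \ref{lem:nabla-f-L2}.

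Putting these together yields $\tfrac12\int_\Sigma \varphi_R^2|\nabla\omega|^2 \leq C$ uniformly in $R$, and letting $R\to\infty$ with monotone convergence ($\varphi_R\uparrow 1$ on $\Sigma$ since $\Sigma$ is properly immersed) gives $\int_\Sigma|\nabla\omega|^2 < \infty$. The only mildly delicate point is the absorption step — one must first establish the estimate on a fixed $B_{2R_0}$-truncated region (where everything is manifestly finite) so that the quantity $\int_\Sigma \varphi_R^2|\nabla\omega|^2$ being moved across the inequality is known to be finite a priori; alternatively one proceeds on $\Sigma\cap B_{2R}$ where $\varphi_R$ has compact support and $\omega$ is smooth up to the boundary, so all integrals are finite and the absorption is legitimate at each finite stage. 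I expect this absorption/finiteness bookkeeping to be the main (though minor) obstacle; the rest is a direct transcription of Lemma \ref{lem:nabla-f-L2} with the Bochner formula in place of the $(-\delta)$-eigenfunction equation.
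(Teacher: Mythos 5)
Your proof is correct and follows essentially the same route as the paper: cut off with $\varphi_R$, apply the Bochner/Weitzenb\"ock identity for harmonic $1$-forms ($\Delta\omega=\kappa\omega$ for the rough Laplacian), and estimate the error terms using $|\kappa|\le C(1+|x|^2)^{-1}$, $\delta<1$, and the $L^2_{-\delta}$ bound on $\omega$. The only immaterial difference is in the cross term $\int_\Sigma\bangle{\nabla\varphi_R^2,\nabla|\omega|^2}$: the paper integrates by parts a second time and uses the bound $|\varphi_R\Delta\varphi_R|\le C|x|^{-2}$ from Lemma \ref{lemm:cutoff-fct}, whereas you apply Cauchy--Schwarz and absorb $\tfrac12\int_\Sigma\varphi_R^2|\nabla\omega|^2$ into the left-hand side (legitimately, since each such integral is finite for fixed $R$ by compact support).
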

\begin{proof}
Recall that the Bochner formula yields $\Delta \omega =\kappa\omega$, for $\Delta$ the rough Laplacian along $\Sigma$. Hence,
\begin{align*}
\int_{\Sigma} \varphi_{R}^{2}|\nabla \omega|^{2} & = - \int_{\Sigma}\varphi_{R}^{2} \bangle{ \Delta \omega,\omega} - \int_{\Sigma} \bangle{\nabla \varphi_{R}^{2} \omega,\nabla \omega}\\
& = - \int_{\Sigma}\varphi_{R}^{2} \kappa |\omega|^{2}  - \frac 12 \int_{\Sigma} \bangle{\nabla \varphi_{R}^{2} ,\nabla |\omega|^{2}}\\
& = - \int_{\Sigma}\varphi_{R}^{2} \kappa |\omega|^{2}  + \int_{\Sigma} (\varphi_{R}\Delta\varphi_{R} +|\nabla \varphi_{R}|^{2}) |\omega|^{2} 
\end{align*}
From this, the result follows in a similar manner to Lemma \ref{lem:nabla-f-L2}.
\end{proof}

The following Bochner-type formula and the rigidity statement due to Ros is of crucial importance in our proof of Theorem \ref{theo:index-bd}. Recall that $\sff$ is the second fundamental form of $\Sigma$. 

\begin{lemm}[\protect{\cite[Lemma 1]{Ros:one-sided}}]\label{lemm:harm-bochner}
For $\Sigma$ an orientable nonflat minimal surface immersed in $\RR^{3}$ and $\omega$ a harmonic $1$-form on $\Sigma$. Then, for $k=1,2,3$
\begin{equation*}
\Delta \bangle{\omega,dx_{k}} -2\kappa \bangle{\omega,dx_{k}}= 2 \bangle{ \nabla \omega, h} N_{k},
\end{equation*}
where $N=(N_{1},N_{2},N_{3})$ is the normal vector to $\Sigma$ and $\Delta$ is the intrinsic Laplacian for functions on $\Sigma$. Moreover, $\bangle{\nabla \omega, h} \equiv 0$ if and only if $\omega \in \cL^{*}(\Sigma) = \Span \{\mbox{$\ast$}dx_{1},\mbox{$\ast$}dx_{2},\mbox{$\ast$}dx_{3}\}$. 
\end{lemm}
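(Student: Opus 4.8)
The plan is to establish the displayed Bochner-type identity first, and then the rigidity statement. The starting point for the identity is that the restriction to $\Sigma$ of the ambient coordinate $x_{k}$ has Hessian $\Hess_{\Sigma}x_{k} = h\,N_{k}$: writing $\bar\nabla$ for the flat connection of $\RR^{3}$ and using $\bar\nabla_{X}Y = \nabla^{\Sigma}_{X}Y + h(X,Y)N$, one gets $\Hess_{\Sigma}x_{k}(X,Y) = \bangle{\bar\nabla_{X}Y,e_{k}} - \bangle{\nabla^{\Sigma}_{X}Y,e_{k}} = h(X,Y)N_{k}$ for tangent fields $X,Y$. Tracing and using minimality gives $\Delta x_{k}=0$, so $dx_{k}$ is a harmonic $1$-form; hence, by the Bochner formula on a surface (recalled in the proof of Lemma~\ref{lemm:nabla-omega-l2}, where $\Ric = \kappa g$), both $\Delta_{B}\omega = \kappa\,\omega$ and $\Delta_{B}(dx_{k}) = \kappa\,dx_{k}$, where $\Delta_{B} = \tr\nabla^{2}$ is the rough Laplacian. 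Feeding these together with $\nabla dx_{k} = h\,N_{k}$ into the tensorial product rule $\Delta\bangle{\omega,dx_{k}} = \bangle{\Delta_{B}\omega,dx_{k}} + \bangle{\omega,\Delta_{B}dx_{k}} + 2\bangle{\nabla\omega,\nabla dx_{k}}$ yields exactly $\Delta\bangle{\omega,dx_{k}} - 2\kappa\bangle{\omega,dx_{k}} = 2\bangle{\nabla\omega,h}N_{k}$.

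For the rigidity, one direction is immediate: if $\omega = *dx_{k}$ then $\nabla\omega = *(\nabla dx_{k}) = N_{k}(*h)$ since $*$ is parallel, and as $*$ acts on the rank-two bundle of symmetric trace-free $2$-tensors as an isometric quarter-turn, $*h$ is pointwise orthogonal to $h$, so $\bangle{\nabla\omega,h} = N_{k}\bangle{*h,h}\equiv 0$; by linearity this holds on all of $\cL^{*}(\Sigma)$. Conversely, suppose $\bangle{\nabla\omega,h}\equiv 0$. Harmonicity forces $\nabla\omega$ to be symmetric ($d\omega=0$) and trace-free ($\delta\omega=0$), hence a section of the same rank-two bundle as $h$; and since $\Sigma$ is nonflat, $h$ (equivalently the Hopf differential, since $\tr h=0$) is a nonzero holomorphic quadratic differential, so $U := \{h\neq 0\}$ is open, dense and connected. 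On $U$ the pair $\{h,*h\}$ is an orthogonal frame for that bundle, so $\bangle{\nabla\omega,h} = 0$ forces $\nabla\omega = \mu\,(*h)$ for a smooth real function $\mu$. I would then differentiate this once more and substitute into the surface Ricci identity $\nabla_{i}\nabla_{j}\omega_{k} - \nabla_{j}\nabla_{i}\omega_{k} = \kappa(\omega_{i}g_{jk} - \omega_{j}g_{ik})$; using that $\nabla h$ is totally symmetric (Codazzi) and divergence-free (minimality), and contracting in $j,k$, the terms in $\nabla\mu$ and $\nabla h$ collapse, leaving $(*h)\,\nabla\mu = -\kappa\,\omega^{\sharp}$ --- equivalently $d\mu = (*h)(\omega^{\sharp},\cdot)$ after invoking the Gauss equation $(*h)^{2} = -\kappa\,\Id$.

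Hence $(\omega,\mu)$ solves, over the connected set $U$, a first-order linear system expressing $\nabla\omega$ and $d\mu$ in terms of $(\omega,\mu)$ itself, so (two solutions agreeing at one point agree everywhere) the solution space has dimension at most $3 = \mathrm{rk}(T^{*}\Sigma\oplus\underline{\RR})$. Now each $*dx_{k}$ is harmonic and satisfies $\bangle{\nabla\omega,h}=0$, so the reasoning just given applies to it (with $\mu = N_{k}$): each $(*dx_{k},N_{k})$ solves this system. These three solutions are linearly independent at every point $p$, since if $\sum_{k}a_{k}\,*dx_{k}$ and $\sum_{k}a_{k}N_{k}$ both vanish at $p$ then, applying $*$, $\sum_{k}a_{k}e_{k}\perp T_{p}\Sigma$, so $(a_{k})$ is a multiple of $N(p)$ and $0 = \sum_{k}a_{k}N_{k}(p) = |(a_{k})|$. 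Therefore they span the solution space, $\omega = \sum_{k}a_{k}\,*dx_{k}$ on $U$ for constants $a_{k}$, and by density on all of $\Sigma$; that is, $\omega\in\cL^{*}(\Sigma)$. I expect the converse direction to be the main obstacle, and within it the passage from $\nabla\omega = \mu\,(*h)$ to $(*h)\,\nabla\mu = -\kappa\,\omega^{\sharp}$ --- i.e.\ recognizing that $(\omega,\mu)$ satisfies a closed first-order system; once that is in place, the dimension count against the geometric solutions $(*dx_{k},N_{k})$ finishes the argument at once. (An alternative for this direction is a direct computation in a Weierstrass parametrization, where $\bangle{\nabla\omega,h}=0$ becomes the condition that the holomorphic $(1,0)$-form associated with $\omega$ lies in the real span of the Weierstrass forms $-i\phi_{1},-i\phi_{2},-i\phi_{3}$.)
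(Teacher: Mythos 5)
Your proof is correct. Note that the paper itself does not prove this lemma at all --- it is quoted verbatim from Ros \cite[Lemma 1]{Ros:one-sided} --- so the relevant comparison is with Ros's original argument, which establishes the identity the same way (via $\Hess_\Sigma x_k = \sff\, N_k$ and the Weitzenb\"ock formula) but proves the rigidity statement by a complex-analytic computation in the Weierstrass representation, essentially the ``alternative'' you mention in your last sentence. Your route for the converse is genuinely different and arguably cleaner: from $\bangle{\nabla\omega,\sff}\equiv 0$ and the pointwise orthogonal frame $\{\sff, \ast\sff\}$ of the trace-free symmetric $2$-tensors on $U=\{\sff\neq 0\}$ you get $\nabla\omega=\mu(\ast\sff)$, and the Ricci identity plus Codazzi then closes this up into a first-order linear system for $(\omega,\mu)$, whose solution space is pinned down by the three everywhere-independent geometric solutions $(\ast dx_k, N_k)$. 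I checked the key computations: the contracted commutator identity does give $(\ast\sff)\nabla\mu=-\kappa\,\omega^{\sharp}$ (the $\nabla\sff$ terms cancel by Codazzi alone --- the divergence-free property you invoke is not actually needed), the Gauss equation gives $(\ast\sff)^2=-\kappa\,\Id$ so that $d\mu=(\ast\sff)(\omega^{\sharp},\cdot)$ on $U$, and $dN_k=-\sff((dx_k)^{\sharp},\cdot)=(\ast\sff)((\ast dx_k)^{\sharp},\cdot)$ up to a global sign convention that does not affect the dimension count. The remaining ingredients --- connectedness and density of $U$ (from holomorphy of the Hopf differential on a nonflat minimal surface), the pointwise independence of the $(\ast dx_k,N_k)$, and the extension from $U$ to $\Sigma$ by continuity --- are all handled. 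What your approach buys is a self-contained real-geometric proof that avoids local conformal coordinates; what Ros's buys is brevity, since in the Weierstrass picture the condition $\bangle{\nabla\omega,\sff}=0$ immediately becomes a statement about holomorphic data.
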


Following \cite{Ros:one-sided}, we will write this lemma succinctly as follows: Let
\begin{equation*}
X_{\omega} : = (\bangle{\omega,dx_{1}},\bangle{\omega,dx_{2}},\bangle{\omega,dx_{3}}).
\end{equation*}
Then,
\begin{equation*}
\Delta X_{\omega} -2\kappa X_{\omega} = 2 \bangle{ \nabla \omega, h} N,
\end{equation*}
where this equation is to be interpreted in the component by component sense. As in \cite{Ros:one-sided}, for vector fields $X$ and $Y$ along $\Sigma$ with components $(X_{1},X_{2},X_{3})$ and $(Y_{1},Y_{2},Y_{3})$, we denote by $Q(X,Y)$ the sum $\sum_{i=1}^{3}Q(X_{i},Y_{i})$. 
%Moreover, for a function $f$, we will denote by $Q(X,f) : = \sum_{i=1}^{3}Q(X_{i},f)$. 
For example, for a vector field $X$ along $\Sigma$, we have that 
\begin{equation*}
Q(X,X) = - \int_{\Sigma} \bangle{\nabla X - 2\kappa X,X},
\end{equation*}
where the integrand is the Euclidean inner product of the following vector fields along $X$
\begin{equation*}
(\Delta X_{1}-2\kappa X_{1},\Delta X_{2}-2\kappa X_{2},\Delta X_{3}-2\kappa X_{3}) \qquad \text{and} \qquad (X_{1},X_{2},X_{3}).
\end{equation*}

\section{Proof of Theorem \ref{theo:index-bd}}\label{sec:proof-thm1}

We fix some $\delta \in (0,\frac 12)$ and assume that $\Sigma$ is appropriately rotated so that Lemma \ref{lemm:coordinate-diff-weights} applies. By Proposition \ref{prop:weighted-FC}, there are $(-\delta)$-eigenfunctions $f_{1},\dots,f_{k} \in L^{2}_{-\delta}(\Sigma)$ which span $W \subset L^{2}_{-\delta}(\Sigma)$ and so that for $\phi \in C^{\infty}_{0}(\Sigma)\cap W^{\perp}$, we have $Q(\phi,\phi)\geq0$. By Corollary \ref{coro:weighted-harm}, we may find a $2(g+r-1)$-dimensional subset $V \subset L^{2}_{-\delta}(\Sigma)\cap\sH^{1}(\Sigma)$. Suppose that $\omega \in V$ satisfies $ X_{\omega} \in W^{\perp}$ (where the orthogonal complement is taken with respect to the $L^{2}_{-\delta}$-inner product). We claim that $\omega = c( \mbox{$*$}dx_{3})$ for some $c\in\RR$. This will imply Theorem \ref{theo:index-bd} as follows: requiring $X_{\omega}\in W^{\perp}$ represents $3k$ linear equations in $W$, hence if $3k < 2(g + r)-3$, then we may find a two dimensional subspace $\tilde V\subset V$ so that all $\omega \in \tilde V$ satisfy $X_{\omega} \in W^{\perp}$. This cannot happen if we know that the only such $\omega$ are in the linear span of $\mbox{$*$}dx_{3}$.

Hence, suppose that $\omega \in V$ satisfies $X_{\omega}\in W^{\perp}$. Pick any compactly supported smooth vector field $Y$ with $Y \in W^{\perp}$. We first claim that $Q(X_{\omega},Y) = 0$ for all such $Y$. Choose $R$ sufficiently large so that $B_{R}(0)$ contains the support of $Y$. We set
\begin{equation*}
X_{t} : = \varphi_{R}(X_{\omega} + tY + f_{1}\vec{c}_{1}+\dots+f_{k}\vec{c}_{k}),
\end{equation*}
where $\varphi_{R}$ is the test function constructed in Lemma \ref{lemm:cutoff-fct}. Here, the vectors $\vec{c}_{j} \in \RR^{3}$ depend on $X_{\omega}$, $\varphi_{R}$ and are chosen so that $X_{t} \in W^{\perp}$. In particular, we are requiring that 
\begin{equation*}
\int_{\Sigma} \varphi_{R}(X_{\omega} +  f_{1} \vec{c}_{1}+\dots+f_{k}\vec{c}_{k})f_{j} (1+|x|^{2})^{-\delta} = 0,
\end{equation*}
where we have used the fact that $Y \in W^{\perp}$ and $\varphi_{R} Y = Y$. Because $X_{\omega}f_{j} \in L^{2}(\Sigma)$ and the $f_{1},\dots,f_{k}$ form an $L^{2}_{-\delta}$-orthonormal basis for $W$, the dominated convergence theorem guarantees that the $\vec{c}_{j}$ tend to $0$ as $R\to\infty$. 

Because $X_{t}\in W^{\perp}$, we have that $0\leq Q(X_{t},X_{t})$. Note that $Q(X_{\omega},Y) = Q(\varphi_{R} X_{\omega},Y) = Q(X_{\omega},\varphi_{R} Y) = Q(\varphi_{R} X_{\omega},\varphi_{R} Y)$. As such,
\begin{align*}
0 \leq Q(X_{t},X_{t}) & = Q(\varphi_{R} X_{\omega},\varphi_{R} X_{\omega}) +t^{2} Q(Y,Y) + 2tQ(X_{\omega},Y)\\
& \qquad + 2\sum_{i=1}^{k} Q(\varphi_{R} X_{\omega},\varphi_{R} f_{i}\vec{c}_{i})\\
& \qquad + \sum_{i,j=1}^{k} Q(\varphi_{R}  f_{i}\vec{c}_{i},\varphi_{R} f_{j}\vec{c}_{j}).
\end{align*}
Because the $\vec{c}_{j}$'s are independent of $t$, this implies that
\begin{align}
\notag Q(X_{\omega},Y)^{2} & \leq Q(Y,Y)\\
\label{eq:I-II-III} & \times  \left(\underbrace{Q(\varphi_{R} X_{\omega},\varphi_{R} X_{\omega})}_{(\textrm{I})} + 2\sum_{i=1}^{k} \underbrace{Q(\varphi_{R} X_{\omega},\varphi_{R} f_{i}\vec{c}_{i})}_{(\textrm{II})}  + \sum_{i,j=1}^{k} \underbrace{Q(\varphi_{R} f_{i}\vec{c}_{i},\varphi_{R}  f_{j}\vec{c}_{j})}_{(\textrm{III})} \right)
\end{align}
We claim that the term in parenthesis tends to zero as $R\to\infty$. To show this, we consider each term in \eqref{eq:I-II-III} separately. Using Lemma \ref{lemm:harm-bochner}, we have
\begin{align*}
(\textrm{I}) = Q(\varphi_{R} X_{\omega},\varphi_{R} X_{\omega}) & = \int_{\Sigma} (|\nabla (\varphi_{R} X_{\omega})|^{2} + 2\kappa \varphi_{R}^{2} |X_{\omega}|^{2})\\
& = - \int_{\Sigma} \bangle{ \Delta (\varphi_{R} X_{\omega}) - 2\kappa \varphi_{R} X_{\omega}, \varphi_{R} X_{\omega}}\\
& = - \int_{\Sigma} \varphi_{R}^{2} \bangle{ \Delta  X_{\omega} - 2\kappa X_{\omega}, X_{\omega}} \\
& \qquad  - \int_{\Sigma}  ( \varphi_{R}\Delta \varphi_{R} |X_{\omega}|^{2} + 2 \bangle{\nabla \varphi_{R} X_{\omega}, \varphi_{R} \nabla X_{\omega}}) \\
& = - \int_{\Sigma} \left( \varphi_{R}\Delta \varphi_{R} |X_{\omega}|^{2} + \frac 12  \bangle{\nabla \varphi_{R}^{2}, \nabla |X_{\omega}|^{2}} \right) \\
& = \int_{\Sigma} |\nabla \varphi_{R}|^{2} |X_{\omega}|^{2}.
\end{align*}
This satisfies
\begin{align*}
 \int_{\Sigma} |\nabla \varphi_{R}|^{2} |X_{\omega}|^{2}& \leq \frac {C}{R^{2}} \int_{B_{2R}(0)}|X_{\omega}|^{2}\\ 
& \leq C \frac {(1+(2R)^{2})^{\delta}}{R^{2}} \int_{B_{2R}(0)\cap \Sigma}|X_{\omega}|^{2}(1+|x|^{2})^{-\delta}\\ 
& \leq C \frac {(1+(2R)^{2})^{\delta}}{R^{2}} \Vert X_{\omega}\Vert_{L^{2}_{-\delta}(\Sigma)}^{2} \to 0,
\end{align*}
as $R\to\infty$, as long as $\delta <1$. Furthermore, the second term in \eqref{eq:I-II-III} satisfies
\begin{align}
\notag (\textrm{II}) & = Q(\varphi_{R} X_{\omega},\varphi_{R} f_{i}\vec{c}_{i}) \\
\notag & = -\int_{\Sigma} \varphi_{R} \bangle{X_{\omega},\vec{c}_{i}}\left( \Delta(\varphi_{R} f_{i}) - 2\kappa \varphi_{R} f_{i} \right)\\
\notag & =  -\int_{\Sigma} \varphi_{R}^{2} \bangle{X_{\omega},\vec{c}_{i}}\left( \Delta f_{i} - 2\kappa f_{i} \right) - \int_{\Sigma}\varphi_{R} \bangle{X_{\omega},\vec{c}_{i}}(\Delta \varphi_{R} f_{i} + 2\bangle{\nabla \varphi_{R},\nabla f_{i}})\\
\label{eq:I-123} & =  \lambda_{i} \int_{\Sigma} \varphi_{R}^{2} \bangle{X_{\omega},\vec{c}_{i}} f_{i} (1+|x|^{2})^{-\delta} - \int_{\Sigma}\varphi_{R} \Delta \varphi_{R} \bangle{X_{\omega},\vec{c}_{i}} f_{i}  \\
& \notag \qquad - 2\int_{\Sigma} \varphi_{R}\bangle{X_{\omega},\vec{c}_{i}} \bangle{\nabla \varphi_{R},\nabla f_{i}}.
\end{align}
The first term in \eqref{eq:I-123} tends to zero as $R\to\infty$ by the dominated convergence and choice of $X_{\omega}\in W^{\perp}$. The second term in \eqref{eq:I-123} tends to zero as follows:
\begin{align*}
\left| \int_{\Sigma}\varphi_{R}\Delta \varphi_{R} \bangle{X_{\omega},\vec{c}_{i}} f_{i} \right| & \leq \frac {C}{R^{2}} |\vec{c}_{i}| \int_{(B_{2R}(0)\setminus B_{R}(0))\cap\Sigma} |X_{\omega}||f_{i}| \\
& \leq C \frac {(1+(2R)^{2})^{\delta}}{R^{2}} |\vec{c}_{i}| \int_{(B_{2R}(0)\setminus B_{R}(0))\cap\Sigma}(1+|x|^{2})^{-\delta} |X_{\omega}||f_{i}| \\
& \leq C \frac {(1+(2R)^{2})^{\delta}}{R^{2}} |\vec{c}_{i}| \Vert X_{\omega} f_{i}\Vert_{L^{2}_{-\delta}(\Sigma)} \to 0.
\end{align*}
The third term in \eqref{eq:I-123} tends to zero by H\"older's inequality and Lemma \ref{lem:nabla-f-L2}:
\begin{align*}
\left| \int_{\Sigma}\varphi_{R} \bangle{X_{\omega},\vec{c}_{i}} \bangle{\nabla \varphi_{R},\nabla f_{i}} \right| & \leq |\vec{c}_{i}| \left( \int_{\Sigma} |\nabla\varphi_{R}|^{2} |X_{\omega}|^{2}\right)^{\frac 12} \left( \int_{\Sigma\setminus B_{R}(0)} |\nabla f_{i}|^{2}\right)^{\frac 12}\\
& \leq C \frac{(1+4R^{2})^{\frac \delta 2}}{R} |\vec{c}_{i}| \Vert X_{\omega} \Vert_{L^{2}_{-\delta}(\Sigma)}\left( \int_{\Sigma\setminus B_{R}(0)} |\nabla f_{i}|^{2}\right)^{\frac 12}.
\end{align*}
Finally, we have that the third term in \eqref{eq:I-II-III} satisfies
\begin{align*}
(\textrm{III}) & = Q(\varphi_{R} \vec{c}_{i} f_{i},\varphi_{R} \vec{c}_{j} f_{j}) \\
& = -\frac 12 \bangle{\vec{c}_{i},\vec{c}_{j}} \int_{\Sigma} \varphi_{R} f_{j}(\Delta(\varphi_{R} f_{i}) -2\kappa \varphi_{R} f_{i})  - \frac 1 2 \bangle{\vec{c}_{i},\vec{c}_{j}}  \int_{\Sigma} \varphi_{R} f_{i}(\Delta(\varphi_{R} f_{j}) -2\kappa \varphi_{R} f_{j}) \\
& = \frac 12 (\lambda_{i} + \lambda_{j})\bangle{\vec{c}_{i},\vec{c}_{j}}  \int_{\Sigma}\varphi_{R}^{2} f_{i}f_{j}(1+|x|^{2})^{-\delta}  - \bangle{\vec{c}_{i},\vec{c}_{j}}  \int_{\Sigma}\varphi_{R}\Delta\varphi_{R} f_{i}f_{j} \\
& \qquad -\bangle{\vec{c}_{i},\vec{c}_{j}}  \int_{\Sigma} \varphi_{R} \bangle{\nabla \varphi_{R},  f_{i}\nabla f_{j} + f_{j}\nabla f_{i}}\\
& = \frac 12 (\lambda_{i} + \lambda_{j}) \bangle{\vec{c}_{i},\vec{c}_{j}} \int_{\Sigma}\varphi_{R}^{2} f_{i}f_{j}(1+|x|^{2})^{-\delta}  +\bangle{\vec{c}_{i},\vec{c}_{j}}  \int_{\Sigma} |\nabla\varphi_{R}|^{2}f_{i}f_{j}.
\end{align*}
This tends to zero as $R\to\infty$ because the $\vec{c}_{i}$ are tending to zero and
\begin{align*}
\int_{\Sigma} |\nabla\varphi_{R}|^{2}|f_{i}f_{j}| & \leq C \frac{(1+R^{2})^{\delta}}{R^{2}} \Vert f_{i}\Vert_{L^{2}_{-\delta}(\Sigma)}\Vert f_{j}\Vert_{L^{2}_{-\delta}(\Sigma)}\to0.
\end{align*}

The above computations show that $Q(X_{\omega},Y) = 0$ for all compactly supported smooth vector fields $Y \in W^{\perp}$. Fix an arbitrary smooth vector field $\tilde Y \in W^{\perp}$ and let 
\begin{equation*}
\tilde Y_{R} : = \varphi_{R}(\tilde Y +f_{1}\vec{c}_{1}+\dots+f_{k}\vec{c}_{k}),
\end{equation*}
where the $\vec{c}_{j}$ are chosen so that $\tilde Y_{R} \in W^{\perp}$. Observe (as above) that $\vec{c}_{j}\to 0$ as $R\to\infty$ by the dominated convergence theorem. Hence, using Lemma \ref{lemm:harm-bochner} and the compact support of $\tilde Y_{R}$, we have that
\begin{align*}
0 & = Q(X_{\omega},\tilde Y_{R}) \\
& = -\int_{\Sigma}\bangle{\Delta X_{\omega} - 2\kappa X_{\omega}, \tilde Y_{R}}\\
& = - 2 \int_{\Sigma}\bangle{\nabla \omega,h} \bangle{N,\tilde Y_{R}}.
\end{align*}
Lemma \ref{lemm:nabla-omega-l2} shows that $|\nabla \omega|\in L^{2}(\Sigma)$, and because the second fundamental form satisfies $|h| \leq (1+|x|^{2})^{-\frac 12}$, we may use the dominated convergence theorem to see that
\begin{equation*}
 \int_{\Sigma}\bangle{\nabla \omega,h} \bangle{N,\tilde Y} = 0.
\end{equation*}
Similarly, we may show that for any vector $\vec{\alpha} \in \RR^{3}$ and eigenfunction $f_{i}\in W$ from Proposition \ref{prop:weighted-FC}, then
\begin{equation*}
 \int_{\Sigma}\bangle{\nabla \omega,h} \bangle{N, f_{i}\vec{\alpha}} = 0.
\end{equation*}
Putting this together, we obtain $\bangle{\nabla\omega,h} = 0$. Thus, $\omega \in\cL^{*}(\Sigma) =  \Span \{\mbox{$*$}dx_{1},\mbox{$*$}dx_{2},\mbox{$*$}dx_{3}\}$. However, we have assumed that $\Sigma$ is rotated so that Lemma \ref{lemm:coordinate-diff-weights} applies. Hence, we must have that $\omega =c(\mbox{$*$}dx_{3})$ as claimed.

\bibliography{bib} 
\bibliographystyle{amsalpha}
\end{document}